\newfont{\footsc}{cmcsc10 at 8truept}
\newfont{\footbf}{cmbx10 at 8truept}
\newfont{\footrm}{cmr10 at 10truept}
\renewcommand{\ps@plain}{%
\renewcommand{\@oddfoot}{\footsc 
\hfil\footrm\thepage}}
\definecolor{mygreen}{rgb}{0.286,0.463,0.329}
\definecolor{mygrey}{rgb}{0.7,0.7,0.7} 
\definecolor{greygreen}{rgb}{0.357,0.51,0.447}
\definecolor{greyblue}{rgb}{0.357,0.447,0.51}
\definecolor{ltgreyblue}{rgb}{0.557,0.647,0.71}
\definecolor{titlecol}{rgb}{0.326,0.082,0.549} 
\definecolor{myhdrcol}{rgb}{0.326,0.082,0.549}
\definecolor{hlit}{rgb}{0,0.6,0}
\definecolor{medcol}{rgb}{0.8,0.8,0.8}
\definecolor{myorange}{rgb}{1,0.451,0}
\definecolor{myred}{rgb}{0.326,0.082,0.549}
\definecolor{myemph}{rgb}{1,0.537,0.184}
\theoremstyle{plain}
\newtheorem{theorem}{Theorem}
\newtheorem{corollary}[theorem]{Corollary}
\theoremstyle{definition}
\newtheorem{example}[theorem]{Example}
\newtheorem{remark}[theorem]{Remark}
\newcommand{\Mtrans}{\mathcal{M}} 
\newcommand{\N}{\mathbb{N}}
\newcommand{\Q}{\mathbb{Q}}
\newcommand{\symmgp}{\mathfrak{S}}
\newcommand{\FQ}{F} 
\DeclareMathOperator{\Asc}{Asc}
\DeclareMathOperator{\Bre}{Bre}
\DeclareMathOperator{\chg}{chg}
\DeclareMathOperator{\comp}{comp}
\DeclareMathOperator{\Comp}{Comp}
\DeclareMathOperator{\Des}{Des}
\DeclareMathOperator{\espec}{Esp}
\DeclareMathOperator{\id}{id}
\DeclareMathOperator{\IDes}{IDes}
\DeclareMathOperator{\NSym}{NSym}
\DeclareMathOperator{\Par}{Par}
\DeclareMathOperator{\QAsc}{QAsc}
\DeclareMathOperator{\sgn}{sgn}
\DeclareMathOperator{\sort}{sort}
\DeclareMathOperator{\spec}{Sp}
\DeclareMathOperator{\QSp}{QSp}
\DeclareMathOperator{\QSym}{QSym}
\DeclareMathOperator{\ssyt}{SSYT}
\DeclareMathOperator{\stdz}{stdz}
\DeclareMathOperator{\sub}{sub}
\DeclareMathOperator{\Sym}{Sym}
\DeclareMathOperator{\syt}{SYT}
\DeclareMathOperator{\tstat}{tstat}
\DeclareMathOperator{\wt}{wt}
\DeclareMathOperator{\comaj}{comaj}
\DeclareMathOperator{\cont}{cont}
\DeclareMathOperator{\inv}{inv}
\DeclareMathOperator{\SCT}{SCT}
\DeclareMathOperator{\SSCT}{SSCT}
\DeclareMathOperator{\SumAsc}{SumAsc}
\newcommand{\field}{\mathbb{F}}
\newcommand{\ten}{$10$}
\newcommand{\eleven}{$11$}
\newcommand{\twelve}{$12$}
\newcommand{\thirteen}{$13$}
\newcommand{\fourteen}{$14$}
\newcommand{\fifteen}{$15$}
\newcommand{\sixteen}{$16$}
\newcommand{\seventeen}{$17$}
\newcommand{\eighteen}{$18$}
\newlength{\cellsize}
\newcommand\tableau[1]{
\vcenter{
\let\\=\cr
\baselineskip=-16000pt
\lineskiplimit=16000pt
\lineskip=0pt
\halign{&\tableaucell{##}\cr#1\crcr}}}
\newcommand{\tableaucell}[1]{{%
\def \arg{#1}\def \void{}%
\ifx \void \arg
\vbox to \cellsize{\vfil \hrule width \cellsize height 0pt}%
\else
\unitlength=\cellsize
\begin{picture}(1,1)
\put(0,0){\makebox(1,1){$#1$}}
\put(0,0){\line(1,0){1}}
\put(0,1){\line(1,0){1}}
\put(0,0){\line(0,1){1}}
\put(1,0){\line(0,1){1}}
\end{picture}%
\fi}}
\begin{document} 


\title{Transition matrices for symmetric and quasisymmetric
  Hall-Littlewood polynomials}

\subjclass[2010]{05E05; 05E10, 16T30}
\keywords{symmetric functions, quasisymmetric functions,
  Hall-Littlewood polynomials, standardization, Young tableaux,
  noncommutative symmetric functions}

\date{\today}

\author{Nicholas A. Loehr}
\address{Dept. of Mathematics\\
  Virginia Tech \\
  Blacksburg, VA 24061-0123 \\
and Mathematics Department\\
United States Naval Academy \\
Annapolis, MD 21402-5002}
\email{nloehr@vt.edu}

\author{Luis G. Serrano}
\address{Laboratoire de combinatoire et d'informatique math\'ematique (LaCIM)\\
  Universit\'e du Qu\'ebec \`a Montr\'eal\\
  Montr\'eal, QC, Canada}
\email{serrano@lacim.ca}

\author{Gregory S. Warrington}
\address{Dept. of Mathematics and Statistics\\
  University of Vermont \\
  Burlington, VT 05401}
\email{gregory.warrington@uvm.edu}

\thanks{This work was partially supported by a grant
from the Simons Foundation (\#244398 to Nicholas Loehr).}

\thanks{Second author supported by a Natural Science and Engineering
  Research Council of Canada PDF grant.}

\thanks{Third author supported in part by National Security Agency
  grant H98230-09-1-0023 and National Science Foundation grant DMS-1201312.}

\thanks{This work was partially supported by a grant from the Simons
  Foundation (\#197419 to Greg Warrington).}


\begin{abstract}
  We introduce explicit combinatorial interpretations for the
  coefficients in some of the transition matrices relating to skew
  Hall-Littlewood polynomials $P_{\lambda/\mu}(t)$ and Hivert's
  quasisymmetric Hall-Littlewood polynomials $G_\gamma(t)$.  More
  specifically, we provide: 
  \begin{enumerate}
  \item the $G$-expansions of 
     the Hall-Littlewood polynomials $P_{\lambda}(t)$, 
     the monomial quasisymmetric polynomials $M_{\alpha}$, 
     the quasisymmetric Schur polynomials $\mathcal{S}_\alpha$, 
     and the peak quasisymmetric functions $K_\alpha$;
   \item an expansion of $P_{\lambda/\mu}(t)$ in terms of the $\FQ_\alpha$'s.
  \end{enumerate}
  The $\FQ$-expansion of $P_{\lambda/\mu}(t)$ is facilitated
  by introducing \emph{starred tableaux}.
\end{abstract}

\maketitle

\section{Introduction}
\label{sec:intro}

The ring of symmetric functions $\Sym$ and the ring of quasisymmetric
functions $\QSym$ both play important roles in algebra and
combinatorics.  Much of the combinatorial richness arising from these
rings stems from their various distinguished bases and the relationships
between these bases.  The goal of this paper is to present explicit,
combinatorial descriptions of several such transition matrices
relating to the Hall-Littlewood polynomials.  Figure~\ref{fig:prism}
illustrates some of the bases discussed.

\begin{figure}[htbp]
  \centering {\scalebox{1}{\includegraphics{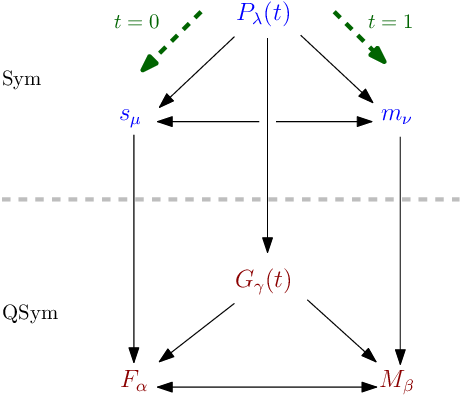}}}
  \caption{Prism of bases and transitions.}
  \label{fig:prism}
\end{figure}

In the top triangle in Figure~\ref{fig:prism} are included two classical
bases for the ring of symmetric functions: the \emph{Schur functions}
$s_\mu$ and the \emph{monomial symmetric functions} $m_\nu$.  The
$s_\mu$ and $m_\nu$ are closely related to a third, one-parameter
family of symmetric functions $P_\lambda(t)$, known as
\emph{Hall-Littlewood polynomials}.  More specifically, $P_\lambda(t)$
equals $s_\lambda$ at $t=0$, and it equals $m_\lambda$ at $t=1$.  The
$P_\lambda(t)$ arose out of a problem studied by P.~Hall.  Hall had used his
eponymous algebra (isomorphic to the algebra of symmetric functions)
to encode the structure of finite abelian $p$-groups.  However, at the
time there was no known explicit basis of symmetric functions with the
same structure constants as that of the natural basis for Hall's
algebra.  D. E. Littlewood~\cite{littlew} solved this problem in 1961
with his introduction of the $P_\lambda(t)$.

The bottom triangle of Figure~\ref{fig:prism} consists of
quasisymmetric analogues of the above bases.  In the context of
quasisymmetric functions, the \emph{monomial quasisymmetric
  functions}, $M_\beta$, are a very natural analogue of the $m_\nu$.
There are several possible quasisymmetric analogues of the Schur
functions, including the ``quasisymmetric Schur functions''
$\mathcal{S}_{\alpha}$ introduced in~\cite{hlmvw} and discussed later
in this paper.  However, for reasons described in the next paragraph,
we anchor the lower-left portion of the bottom triangle in
Figure~\ref{fig:prism} by Gessel's \emph{fundamental quasisymmetric
  functions}, denoted here by $\FQ_\alpha$.  By defining an action of 
the Hecke algebra on polynomials which leaves the quasisymmetric
functions invariant, Hivert~\cite{hivert} constructed the
\emph{quasisymmetric Hall-Littlewood polynomials}
$G_\gamma(t)$. (See also work of Lascoux, Novelli, and Thibon
\cite{LNT} for constructions of quasisymmetric and noncommutative
symmetric functions with extra parameters.) Similarly to what happens
in the top triangle, specialization of the $G_\gamma(t)$ at $t=0$ (which
corresponds to the southwest-pointing arrow in Figure~\ref{fig:prism})
yields $\FQ_\gamma$, while specialization at $t=1$ yields $M_\gamma$.

We now motivate our choice of the $\FQ_\alpha$ as the desired
quasisymmetric analogue of the Schur functions.  The Schur functions
are the prototypical example of a symmetric function with
combinatorial expansions in terms of both a collection of
\emph{semistandard objects} (i.e., semistandard Young tableaux) and of
\emph{standard objects} (i.e., standard Young tableaux). The first
case is that of the classical expansion in terms of monomials weighted
by the Kostka numbers. The second expansion (due to
Gessel~\cite{gessel}) expresses the Schur functions in terms of
fundamental quasisymmetric functions $\FQ_\alpha$.  This expansion,
which follows from the technique of \emph{standardization}, is
indicated by the vertical line connecting $s_\mu$ and $\FQ_\alpha$
in Figure~\ref{fig:prism}.  Such standardizations have been used recently
to give $F$-expansions of various symmetric functions including
plethysms of Schur functions~\cite{spleth}, the modified Macdonald
polynomials~\cite{hag,HHL}, the Lascoux-Leclerc-Thibon (LLT)
polynomials~\cite{llt}, and (conjecturally) the image of a Schur
function under the Bergeron-Garsia nabla operator~\cite{nablaschur}.

Given Hivert's construction, the following question arises. Is there
an expansion of the $P_\lambda(t)$ in terms of the $G_\gamma(t)$,
which would interpolate between the $F$-expansion of the $s_\mu$ at
$t=0$ and the $M$-expansion of the $m_\nu$ at $t=1$? The main purpose
of this paper is to provide such an expansion, as well as other
change-of-basis matrices between different bases of the Hall algebra
and the algebra of quasisymmetric functions.  In terms of
Figure~\ref{fig:prism}, we provide the middle vertical edge as well as
the reverses of the two downward directed edges in the bottom face
(namely, from each of $\FQ_\alpha$ and $M_\beta$ to $G_\gamma(t)$).  More
specifically, our principal results are:

\begin{enumerate}
\item\emph{$G$-expansion of the $P$ basis.}
In Theorem~\ref{thm:mpg} we give an explicit combinatorial expansion
of the Hall-Littlewood polynomials $P_{\lambda}(t)$ in terms of the
Hivert quasisymmetric Hall-Littlewood polynomials
$G_\gamma(t)$. This provides the desired $t$-interpolation between
Gessel's $F$-expansion of Schur polynomials (i.e., $t=0$) and the
obvious expansion of $m_\nu$'s into $M_\beta$'s (i.e., $t=1$). A
key step in our construction is to combine the two expansions described
in the next two items.

\item\emph{$F$-expansion of the $P$ basis.}
One of the main tools for our calculations is the definition of a new
class of tableaux, called \emph{starred tableaux}. With these, we give
in Theorem~\ref{thm:skewP-to-FQ} a combinatorial expansion of the skew
Hall-Littlewood polynomials $P_{\lambda/\mu}(t)$ in terms of the
fundamental quasisymmetric functions $\FQ_\alpha$.  A minor
variation to our method gives a corresponding expansion for the dual
Hall-Littlewood polynomials $Q_{\lambda/\mu}$ (see
Theorem~\ref{thm:skewQ-to-FQ}).

\item\emph{$G$-expansions of the $F$ basis and the $M$ basis.}  In
  Theorems~\ref{thm:MFQG} and~\ref{thm:MMG} we give explicit
  combinatorial expansions for the $\FQ_\alpha$ and the $M_\beta$ in
  terms of the $G_\gamma(t)$.  These are inverse matrices to those
  found in~\cite{hivert}.  See Remark~\ref{rem:nsym} below for the
  relationship to existing results in the realm of noncommutative
  symmetric functions.

\item\emph{$G$-expansions of the $\mathcal{S}$ basis and
    $K_{\alpha}$.}  In Theorems~\ref{thm:MSG} and~\ref{thm:MKG} we
  give explicit combinatorial expansions for the quasisymmetric Schur
  functions $\mathcal{S}_{\beta}$ and the peak quasisymmetric
  functions $K_{\alpha}$ in terms of the $G_\gamma(t)$'s.
\end{enumerate}

\begin{remark}\label{rem:nsym}
  The algebra $\NSym$ of noncommutative symmetric functions, developed
  by Gelfand, Krob, Lascoux, Leclerc, Retakh, and
  Thibon~\cite{gkllrt}, is a dual Hopf algebra to $\QSym$. Many bases
  of $\NSym$ have been developed (see, for
  example,~\cite{hlmvw,LNT,tevlin}).  In papers such as
  ~\cite{bbssz,bz,ntw,tevlin11}, attempts are made to construct bases
  that are suitable analogues in $\NSym$ of the Hall-Littlewood
  polynomials.  Of the transition matrices we describe in
  ~\S\ref{sec:Fexp} and~\S\ref{sec:Gexp}, the only one which we know
  to have been constructed in the dual setting of $\NSym$ is that of
  $\Mtrans(F,G)$: Hivert introduces \emph{noncommutative
    Hall-Littlewood polynomials} $H_\gamma(t)$ that are dual to the
  $G_\gamma(t)$.  If we let $R$ denote the basis for $\NSym$ of
  \emph{ribbon Schur functions}, then it follows that $\Mtrans(F,G) =
  \Mtrans(H,R)^T$ (see~\cite[Theorem 6.13]{ntw}).  Our proof of
  Theorem~\ref{thm:mfg} is substantially the same as Hivert's proof.
  We include our proof both for completeness and so as to give a
  derivation that does not invoke $\NSym$.
\end{remark}

The structure of the paper is as follows.  For ease of reference, we
define all bases discussed here in~\S\ref{sec:review-bases} and
summarize known combinatorial transition matrices
in~\S\ref{sec:rev-mat}.  The expansions of the various polynomials in
terms of the $\FQ_\alpha$'s and the $G_\gamma(t)$'s are presented in
\S\ref{sec:Fexp} and \S\ref{sec:Gexp}, respectively.  
\S\ref{sec:Qpleth} contains a few remarks on transition matrices
for plethystically transformed Hall-Littlewood polynomials.
Finally, specific examples of transition matrices discussed here are
listed in Appendix~\ref{sec:app}.  SAGE code for computing these
transition matrices is available on the third author's
website~\cite{sage-code}.

\section{Review of Symmetric and Quasisymmetric Bases}
\label{sec:review-bases}

This section reviews the definitions of the symmetric and
quasisymmetric functions appearing in Figure~\ref{fig:prism}.
Logically, the precise definitions of the various bases are not needed
in this paper, as the expansions found in~\S\ref{sec:Fexp}
and~\S\ref{sec:Gexp} are derived from the known transition matrices
of~\S\ref{sec:rev-mat}.  However, the material of this section is
included for completeness.  

It will be necessary to introduce a number of functions in the
variables $x_1,\ldots,x_N$, some of which have an extra parameter $t$.
For brevity, we will suppress much of this notation.  For example,
$G_\gamma(x_1,\ldots,x_N;t)$, $G_\gamma(x;t)$, $G_\gamma(t)$, and
$G_\gamma$ all refer to the Hivert quasisymmetric function indexed by
$\gamma$.

\subsection{Compositions and Partitions}
\label{subsec:comp-par}

Given $n\in\N$, a \emph{composition of $n$} is a sequence
$\alpha=(\alpha_1,\alpha_2,\ldots,\alpha_k)$ of positive integers
(called \emph{parts}) with $\alpha_1+\cdots+\alpha_k=n$.  Define the
\emph{length} $\ell(\alpha)$ to be the number of parts of $\alpha$,
and the \emph{size} $|\alpha|$ to be the sum of its parts. 
For example, the composition $\alpha=(2,4,1)$ has
$\ell(\alpha) = 3$ and $|\alpha|=7$.  We may abbreviate the notation,
writing $\alpha$ as $241$, when no confusion can arise.  Let $\Comp_n$ be the
set of compositions of $n$, and let $\Comp$ be the set of all compositions.
A composition
$\lambda=(\lambda_1,\lambda_2,\ldots,\lambda_k)\in\Comp_n$ is called a
\emph{partition of $n$} iff $\lambda_1\geq\lambda_2\geq\cdots \geq
\lambda_k$. We write $\Par_n$ for the set of partitions of $n$ and
$\Par$ for the set of all partitions.  For any composition or partition $\alpha$, we define
$\alpha_j=0$ for all $j>\ell(\alpha)$.

For $n\in\N^+$, there are $2^{n-1}$ compositions of $n$ and $2^{n-1}$
subsets of $[n-1]=\{1,2,\ldots,n-1\}$.  One can define natural
bijections between these sets of objects as follows.  Given
$\alpha\in\Comp_n$ as above, let $$\sub(\alpha)
=\{\alpha_1,\alpha_1+\alpha_2,\alpha_1+\alpha_2+\alpha_3,\ldots,
\alpha_1+\cdots+\alpha_{k-1}\} \subseteq [n-1].$$ The inverse bijection
sends any subset $T=\{t_1<t_2<\cdots<t_m\}\subseteq [n-1]$ to
$$\comp(T)=(t_1,t_2-t_1,t_3-t_2,\ldots,t_m-t_{m-1},n-t_m)\in\Comp_n.$$

Given $\alpha,\beta\in\Comp_n$, we say $\beta$ is \emph{finer} than
$\alpha$, denoted $\beta\succeq\alpha$, iff
$\sub(\alpha)\subseteq\sub(\beta)$. Informally, $\beta$ is finer than
$\alpha$ if we can chop up some of the parts of $\alpha$ into smaller
pieces (without reordering anything) and obtain $\beta$. For example,
$1111\succeq 121 \succeq 31\succeq 4$.

\subsection{Symmetric Polynomials} 
\label{subsec:symm-bases}

Let $\field$ be a commutative ring, and let $\symmgp_N$ denote the
symmetric group on $N$ letters.  A polynomial $f\in \field[x_1,\ldots,x_N]$
is called \emph{symmetric}
iff $$w(f(x)) = f(x_{w(1)},x_{w(2)},\ldots,x_{w(N)})=f(x_1,x_2,\ldots,x_N)
\mbox{ for all $w\in \symmgp_N$.}$$ Write 
$\Sym_N=\Sym_N(\field)$ for the ring of symmetric
polynomials in $N$ variables with coefficients in $\field$;
we usually omit the ground ring $\field$ from the notation. 
For each $n\geq 0$, let $\Sym_N^n=\Sym_N^n(\field)$ be the subspace of $\Sym_N$
consisting of zero and the homogeneous polynomials of degree $n$. For
$N\geq n$, bases of the vector space $\Sym_N^n$ are naturally indexed
by partitions of $n$.

Given $\lambda\in\Par_n$ of length $k\leq N$, the \emph{monomial
  symmetric polynomial} $m_{\lambda}(x_1,\ldots,x_N)$ is the sum of
all distinct monomials that can be obtained by permuting subscripts in
$x^{\lambda}=x_1^{\lambda_1} x_2^{\lambda_2} \cdots x_k^{\lambda_k}$.  
For $N\geq n$, $\{m_{\lambda}(x_1,\ldots,x_N):\lambda\in\Par_n\}$ is readily 
seen to be a basis of $\Sym_N^n$.

Now suppose $N\geq n$ and that $\nu\in\Par_n$ is a partition with
\emph{distinct} parts.  
The \emph{skew-symmetric polynomial 
indexed by $\nu$ in $N$ variables} is
\begin{equation*}
  a_{\nu}(x_1,\ldots,x_N)
  =\sum_{w\in \symmgp_N} \sgn(w) w(x^{\nu}) = \det\|
  x_i^{\nu_j} \|_{1\leq i,j\leq N}. 
\end{equation*}
In particular, letting $\delta_N=(N-1,N-2,\ldots,2,1)$,
$a_{\delta_N}(x_1,\ldots,x_N) = \prod_{1\leq i<j\leq N} (x_i-x_j)$ is
the Vandermonde determinant.  Given $\lambda\in\Par_n$, the
\emph{Schur symmetric polynomial indexed by $\lambda$ in $N$
  variables} is
\begin{equation*}
  s_{\lambda}(x_1,\ldots,x_N)=
 \frac{a_{\lambda+\delta_N}(x_1,\ldots,x_N)}{a_{\delta_N}(x_1,\ldots,x_N)}.
\end{equation*}

For the rest of the paper, let $t$ be an indeterminate, 
and let $\field$ be any
field containing $\Q(t)$ as a subfield.  Following~\cite[\S III.1,
  pp. 204--7]{Macd}, we define the \emph{Hall-Littlewood symmetric
  polynomials} as follows.  Fix $\lambda\in\Par_n$ and $N\geq n$. 
Define $[0]_t=0$ and for $m\geq 1$, $[m]_t=1+t+t^2+\cdots+t^{m-1}$.
Also set $[m]!_t=\prod_{i=1}^m [i]_t$, and $[0]!_t=1$.  Define
$v_\lambda(t) = \prod_{i\geq 0} [m_i]!_t$ where $m_i$ denotes the
number of occurrences of $i$ as a part of $\lambda$.  Then the
Hall-Littlewood polynomial indexed by $\lambda$ is
\begin{equation}\label{eq:P-def}
  P_{\lambda}(x;t) = 
\frac{1}{v_{\lambda}(t)} \sum_{w\in\symmgp_N} w \left(x^{\lambda}
 \prod_{i<j} \frac{x_i-tx_j}{x_i-x_j}\right).  
\end{equation} 

Setting $t=0$ in $P_{\lambda}$ gives $s_{\lambda}$, whereas setting
$t=1$ in $P_{\lambda}$ gives $m_{\lambda}$.  Thus, the Hall-Littlewood
basis ``interpolates'' between the Schur basis and the monomial basis.

One can define Schur polynomials and Hall-Littlewood polynomials more
concretely by giving combinatorial descriptions of their expansions in
terms of monomial symmetric polynomials.  See~\S\ref{subsec:Msm}
and~\S\ref{subsec:MPm} below.

\subsection{Quasisymmetric Polynomials}
\label{subsec:qsym-bases}

A polynomial $f\in \field[x_1,\ldots,x_N]$ is called \emph{quasisymmetric}
iff for every composition $\alpha=(\alpha_1,\ldots,\alpha_k)$ with at
most $N$ parts and every $1\leq i_1<i_2<\cdots <i_k\leq N$, the
monomials $x_1^{\alpha_1}x_2^{\alpha_2}\cdots x_k^{\alpha_k}$ and
$x_{i_1}^{\alpha_1}x_{i_2}^{\alpha_2}\cdots x_{i_k}^{\alpha_k}$ have
the same coefficient in $f$. Write $\QSym_N=\QSym_N(\field)$ for the ring of
quasisymmetric polynomials in $N$ variables with coefficients
in $\field$. For each $n\geq 0$, let
$\QSym_N^n=\QSym_N^n(\field)$ be the subspace of $\QSym_N$ consisting 
of zero and the
homogeneous polynomials of degree $n$. For $N\geq n$, bases of the
vector space $\QSym_N^n$ are naturally indexed by compositions of
$n$. Symmetric polynomials are quasisymmetric, so $\Sym_N^n$ is a
subspace of $\QSym_N^n$.

For $\alpha\in\Comp_n$ of length $k\leq N$, the
\emph{monomial quasisymmetric polynomial}  $M_{\alpha}(x_1,\ldots,x_N)$
is the sum of all monomials $x_{i_1}^{\alpha_1} x_{i_2}^{\alpha_2} \cdots
 x_{i_k}^{\alpha_k}$ for which $1\leq i_1<i_2<\cdots <i_k\leq N$. For $N\geq n$,
$\{M_{\alpha}(x_1,\ldots,x_N):\alpha\in\Comp_n\}$ is readily seen to be
a basis of $\QSym^n_N$.

Next, for $\alpha\in\Comp_n$ with length at most $N$,
define Gessel's \emph{fundamental quasisymmetric polynomial}~\cite{gessel} by
\begin{equation}\label{eq:FQ-def}
\FQ_{\alpha}(x_1,\ldots,x_N)=\sum x_{w_1}x_{w_2}\cdots x_{w_n},
\end{equation} 
where we sum over all subscript sequences $w=w_1w_2\cdots w_n$ such
that $1\leq w_1\leq w_2\leq\cdots\leq w_n\leq N$ and for all
$j\in\sub(\alpha)$, $w_j<w_{j+1}$. In other words, strict increases in
the subscripts are required in the ``breaks'' between parts of the
composition $\alpha$. Call sequences $w$ satisfying these conditions
\emph{$\sub(\alpha)$-compatible}, and write $x^w=x_{w_1}\cdots
x_{w_n}$.  A routine inclusion-exclusion argument
(cf.~\S\ref{subsec:MFQM-MMFQ} below) shows that for $N\geq n$,
$\{\FQ_{\alpha}(x_1,\ldots,x_N):\alpha\in\Comp_n\}$ is a basis of
$\QSym^n_N$.  Note that some authors index fundamental quasisymmetric
polynomials by pairs $n,T$ where $T\subseteq [n-1]$.  Additionally,
various letters ($F$, $L$, $Q$, etc.) have been used to denote these
polynomials.

As in the symmetric case, we would like to have quasisymmetric
Hall-Littlewood polynomials (depending on a parameter $t$) that
interpolate between $\FQ_{\alpha}$ (when $t=0$) and $M_{\alpha}$ (when
$t=1$).  We sketch the definition of one such family of polynomials,
introduced and studied by Hivert~\cite{hivert}.  Quasisymmetric
functions arise as the invariants of a certain action of $\symmgp_N$
on polynomials.  From this action, one can define divided difference
operators in a degenerate Hecke algebra $H_N(0)$ which can then be
lifted to $H_N(q)$.  Hivert's quasisymmetric Hall-Littlewood
polynomials thereby arise from a corresponding $t$-analogue
$\boxdot_\omega$ of the Weyl symmetrizer. For a composition $\alpha$
of length $k\leq N$, define
\begin{equation*}
  G_{\alpha}(x_1,\ldots,x_N;t) = \frac{1}{[k]!_t[N-k]!_t}
    \boxdot_\omega(x_1^{\alpha_1}\cdots x_k^{\alpha_k}).
\end{equation*}
As in the case of symmetric Hall-Littlewood polynomials, there is a
more concrete combinatorial definition of $G_{\alpha}$ giving its
expansion into monomials. We discuss this definition
in~\S\ref{subsec:MGM}.

Hivert's $G_\alpha(t)$ are quasisymmetric versions of the
Hall-Littlewood polynomials $P_\lambda(t)$; when $t=0$, the latter
specialize to Schur polynomials.  In light of these relationships, the
$G_\alpha(t)$ can be viewed as a quasisymmetric $t$-analogue of Schur
functions.  However, a more direct ``quasisymmetric Schur function''
has been introduced by Haglund, Luoto, Mason, and van Willigenburg
\cite{hlmvw} via specializations of nonsymmetric Macdonald polynomials
to Demazure atoms.  These have a combinatorial expansion, which we now
describe.

Given a composition $\alpha$, one forms its \emph{diagram} by placing
$\alpha_i$ boxes, or cells, in the $i$-th row from top to bottom, and
left-justifying the rows. The entries in the diagram of $\alpha$ are
given matrix coordinates $(i,j)$. A filling $T$ of the diagram of
$\alpha$ is a \emph{semistandard composition tableau} ($\SSCT$) if the
following three conditions hold.
\begin{enumerate}
\item[(C1)] The entries in each row are weakly decreasing when read
  from left to right.
\item[(C2)] Entries in the leftmost column of $T$ are strictly
  increasing when read from top to bottom. 
\item[(C3)] Entries satisfy the \emph{triple rule}, namely, if $(i,k)$
  and $(j,k)$ are two cells in the same column, with $i<j$, then:
	\begin{itemize}
	\item if $\alpha_i \ge \alpha_j$, then either $T(j,k)<T(i,k)$
          or $T(i,k-1)<T(j,k)$; 
	\item if $\alpha_i < \alpha_j$, then either $T(j,k)<T(i,k)$ or
          $T(i,k)<T(j,k+1)$.
	\end{itemize}
\end{enumerate}
It follows from these conditions that all entries in the same column
of an SSCT must be distinct.

The \emph{content} of an $\SSCT$ $T$ is 
$\cont(T)=\alpha=(\alpha_1, \alpha_2, \ldots)$,
where $\alpha_i$ is the number of times $i$ appears in $T$. The
corresponding monomial is $x^T = x^{\alpha}$. For
example, the following picture shows an $\SSCT$ of shape $5264$,
content $32313212$, and monomial $x_1^3 x_2^2 x_3^3 x_4 x_5^3 x_6^2
x_7 x_8^2$.
\begin{equation*}
  \young(32211,41,755533,8866)
\end{equation*}

A \emph{quasisymmetric Schur function} $\mathcal{S}_{\alpha}$ is
defined as the generating series of semistandard composition tableaux,
i.e., $\mathcal{S}_{\alpha} = \sum x^T$, where the sum
runs over all $\SSCT$ of shape~$\alpha$.  Part of their importance stems
from the fact that they provide refinements of Schur functions,
as in the formula
\[ 
s_\lambda = \sum_{\sort(\alpha) = \lambda} \mathcal{S}_\alpha,
\]
where $\sort(\alpha)$ is the partition obtained by organizing the
parts of $\alpha$ from largest to smallest.

As discussed, the Hall-Littlewood functions specialize to Schur
functions and monomial symmetric functions when $t$ equals $0$ and
$1$, respectively.  The \emph{Schur $P$-functions} are the $t=-1$
specializations. They are indexed by strict partitions, namely,
partitions into distinct parts.  The Schur $P$-functions are another
example of a family of symmetric functions with combinatorial
expansions in terms of both a collection of semistandard objects and a
collection of standard objects. The first one is the classical
expansion of the Schur $P$-functions as a generating series for
\emph{shifted semistandard Young tableaux}, where each tableau is
weighted by its corresponding monomial (see, e.g.,
\cite{Hai,Sag,Ste1}). The second one is the expansion given by
Stembridge \cite{Ste2} of the Schur $P$-functions as a sum over
\emph{shifted standard Young tableaux} of the corresponding \emph{peak
  quasisymmetric functions}, denoted $K_{\alpha}$.  The $K_\alpha$
span an important subalgebra of $\QSym$ called the \emph{peak
  quasisymmetric algebra}.  While we will not be concerned with Schur
$P$-functions in this paper, transition matrices involving the peak
quasisymmetric functions will be presented.

Let $\Comp'_n$ be the set of compositions of $n$ with no parts of
length $1$, except perhaps the last one.  Note that $\alpha \in \Comp'_n$ 
if and only if $\sub(\alpha)$ is a subset of $[n-1]$ with
no consecutive elements.  For each $\alpha\in \Comp'_n$,
Stembridge~\cite{Ste2} defines the peak quasisymmetric function
$K_\alpha$ as the generating series of certain \emph{enriched
  $P$-partitions}.  We direct the reader to \cite{Ste2} for this
definition and motivation.  In this paper, however, we will use as a
definition their expansion into fundamental quasisymmetric functions.  

For $B \subseteq [n-1]$, let $B+1 = \{b+1:\, b\in B\}\setminus \{n\}$.
Furthermore, we write $\bigtriangleup$ for the symmetric difference
between two sets, namely, $A \bigtriangleup B = (A \cup B) \setminus
(A \cap B)$.  Then, as in \cite[Proposition 3.5]{Ste2}
for $\alpha \in \Comp'_n$, let
\begin{equation*}
  K_\alpha = 
  \sum_{\beta:\ \sub(\alpha) \subseteq \sub(\beta) \bigtriangleup (\sub(\beta)+1)} \FQ_\beta.
\end{equation*}

\section{Review of Known Transition Matrices}
\label{sec:rev-mat}

In the theory of symmetric and quasisymmetric polynomials,
much combinatorial information is encoded in the transition
matrices between various bases. Given two bases 
$B=\{B_{\lambda}: \lambda\in\Par_n\}$ and
$C=\{C_{\lambda}: \lambda\in\Par_n\}$ of $\Sym_N^n$,
the \emph{transition matrix} $\Mtrans(B,C)$ is the unique matrix 
(with entries in $\field$ and rows and columns indexed by partitions of $n$) 
such that
\begin{equation*}
  B_{\lambda}=\sum_{\mu\in\Par_n} \Mtrans(B,C)_{\lambda,\mu}C_{\mu}.
\end{equation*}
Given a third basis $D$, 
it follows readily that $\Mtrans(B,D)=\Mtrans(B,C)\Mtrans(C,D)$
and $\Mtrans(C,B)=\Mtrans(B,C)^{-1}$. We define $\Mtrans(B,C)$ similarly if $B$ and $C$
are bases of $\QSym_N^n$, but here the rows and columns of the
matrix are indexed by compositions of $n$. Finally, if $B$ is a basis
of $\Sym_N^n$ and $C$ is a basis of $\QSym_N^n$, then $\Mtrans(B,C)$
is a rectangular matrix expressing each $B_{\lambda}$ as an $\field$-linear
combination of the $C_{\alpha}$'s.

This section gives combinatorial formulas for previously known
transition matrices associated to some of the edges in
Figure~\ref{fig:prism}. Transitions to the monomial bases offer
alternate explicit definitions for Schur polynomials and the various
forms of Hall-Littlewood polynomials. 
Specific examples of these transition matrices appear in
Appendix~\ref{sec:app}.  SAGE code for computing these transition
matrices is available on the third author's website~\cite{sage-code}.

\subsection{$\boldsymbol{\Mtrans(s,m)}$}
\label{subsec:Msm}

The expansion of Schur polynomials into monomials uses
semistandard tableaux. For later work, we will also need tableaux
of skew shape. Suppose $\lambda,\nu\in\Par$ satisfy
$\nu\subseteq\lambda$, i.e., $\nu_i\leq \lambda_i$ for all $i$. 
Define the \emph{skew diagram}
\[ \lambda/\nu=\{(i,j)\in\N^+\times\N^+:
  1\leq i\leq\ell(\lambda), \nu_i<j\leq \lambda_i \}. \]
We will draw skew diagrams using the English convention where
 the longest rows are at the top. For $N\in\N^+$, 
a \emph{semistandard tableau} of shape $\lambda/\nu$ with
entries in $[N]=\{1,2,\ldots,N\}$ is a function 
$T:\lambda/\nu\rightarrow [N]$ that is weakly increasing along rows
and strictly increasing down columns. Writing $n=|\lambda/\nu|$,
a \emph{standard tableau} of shape $\lambda/\nu$ is a bijection
$S:\lambda/\nu\rightarrow [n]$ that is also a semistandard tableau.
Let $\ssyt_N(\lambda/\nu)$ be the set of all semistandard tableaux
of shape $\lambda/\nu$ with entries in $[N]$, 
and let $\syt(\lambda/\nu)$ be the set of all
standard tableaux of shape $\lambda/\nu$.  
For any $T\in\ssyt_N(\lambda/\nu)$, 
the \emph{content of $T$} is the composition
$\cont(T)=(\alpha_1,\ldots,\alpha_N)$, where
$\alpha_i$ is the number of times $i$ appears in a cell of $T$.  
The \emph{content monomial} $x^{T}$
is $x_1^{\alpha_1}x_2^{\alpha_2}\cdots x_N^{\alpha_N}$.

The \emph{skew Schur polynomial} in $N$ variables can now be defined as
\[ s_{\lambda/\nu}(x_1,\ldots,x_N)
 =\sum_{T\in\ssyt_N(\lambda/\nu)} x^{T}. \]
The ordinary Schur polynomial $s_{\lambda}$ is obtained by taking
$\nu=(0)$ here. Skew Schur polynomials are symmetric, although this is
not obvious from the combinatorial definition. Consequently, we 
have the following expansion of Schur polynomials in terms of
the monomial symmetric polynomials.

\begin{theorem}\label{thm:Msm}
For all $\lambda,\mu\in\Par_n$, $\Mtrans(m,s)_{\lambda,\mu}$
is the number of semistandard tableaux of shape $\lambda$ and content $\mu$.  
(This number is also called the \emph{Kostka number} $K_{\lambda,\mu}$.)
\end{theorem}

\subsection{$\boldsymbol{\Mtrans(m,s)}$} 
\label{subsec:Mms}

E\u{g}ecio\u{g}lu and Remmel~\cite{ER-kinv} found the following
combinatorial formula for the inverse Kostka matrix $\Mtrans(m,s)$.  Fix
$\mu\in\Par$. A \emph{special rim-hook} is a sequence of cells in the
diagram of $\mu$ that begins in the leftmost column and moves up and
right through the diagram. The sign of a rim-hook occupying $r$ rows
is $(-1)^{r-1}$. A \emph{special rim-hook tableau} $S$ of shape $\mu$
is a dissection of the diagram of $\mu$ into a disjoint union of
special rim-hooks.  The \emph{sign} of $S$, $\sgn(S)$, is the product
of the signs of the rim-hooks in it.  The \emph{type} of $S$ is the
integer partition obtained by listing the lengths of the rim-hooks in
$S$ in decreasing order.

\begin{theorem}\label{thm:Mms}\cite[Theorem 1]{ER-kinv}
For all $\lambda,\mu\in\Par_n$, $\Mtrans(m,s)_{\lambda,\mu}=\sum_S \sgn(S)$ 
summed over all special rim-hook tableaux $S$ of shape $\mu$ and type $\lambda$.
\end{theorem}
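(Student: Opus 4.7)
The plan is to show that the quantity $\widetilde{M}_{\lambda,\mu} := \sum_S \sgn(S)$ (with $S$ ranging over special rim-hook tableaux of shape $\mu$ and type $\lambda$) is exactly the $(\lambda,\mu)$-entry of the inverse Kostka matrix. Since $M(s,m)_{\nu,\mu} = K_{\nu,\mu}$ by~\S\ref{subsec:Msm}, it suffices to verify
\[
\sum_{\nu \in \Par_n} \widetilde{M}_{\lambda,\nu}\, K_{\nu,\mu} \;=\; \delta_{\lambda,\mu}
\qquad \text{for all } \lambda,\mu \in \Par_n.
\]
Expanding both factors combinatorially, the left-hand side is a signed enumeration over the set $\mathcal{P}_{\lambda,\mu}$ of pairs $(S,T)$ where $\nu$ is a common shape, $S$ is a special rim-hook tableau of shape $\nu$ and type $\lambda$, and $T \in \ssyt(\nu)$ has content $\mu$; the weight is $\sgn(S)$.

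The central step is to construct a sign-reversing, shape-preserving involution $\iota$ on the non-fixed part of $\mathcal{P}_{\lambda,\mu}$. I would define $\iota$ by scanning the cells of $T$ in a fixed reading order (say, columns left-to-right, each column top-to-bottom) to locate the first cell at which a prescribed ``defect'' occurs — for instance, the first cell $(i,j)$ for which $T(i,j) \neq i$ or for which the value of $T(i,j)$ disagrees with the label of the rim-hook of $S$ containing $(i,j)$ in a specific way. This defect identifies a pair of adjacent special rim-hooks in $S$ that can be either merged into a single rim-hook or split into two; this local modification alters the total number of rows occupied by the affected rim-hooks by exactly one, flipping $\sgn(S)$ while preserving the shape, the type, and all defining conditions of a special rim-hook tableau.

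The fixed points of $\iota$ should be exactly the pairs with $\nu = \lambda = \mu$, $T$ the unique SSYT whose $i$-th row is filled entirely with $i$'s, and $S$ the unique ``row decomposition'' in which each row of $\lambda$ is its own rim-hook (a one-row rim-hook contributes sign $+1$). Uniqueness of this decomposition can be checked by induction on $\ell(\lambda)$: any longest rim-hook of type $\lambda_1$ is forced onto the top row, reducing to the case $\lambda = (\lambda_2,\ldots)$. Thus the only surviving contribution equals $\delta_{\lambda,\mu}$.

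The principal obstacle is making $\iota$ rigorously involutive. This requires choosing the reading order and the notion of ``defect'' so that the modified pair $\iota(S,T) = (S',T)$ exhibits the identical scanning behavior — that is, the same cell is flagged as the first defect both before and after applying the merge/split. One must also verify that the local merge/split never violates the special rim-hook conditions (starting in the leftmost column, moving up-and-right, forming a connected border strip) and that every non-fixed pair indeed admits such an operation. Coordinating these constraints between the semistandard structure of $T$ and the rim-hook partition of $S$ is the delicate technical heart of the proof.
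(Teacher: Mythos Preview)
The paper does not prove this theorem; it is quoted in \S\ref{subsec:Mms} as a known result with a citation to E\u{g}ecio\u{g}lu and Remmel. There is therefore no in-paper argument to compare against.

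Your overall strategy is the right one, and it is essentially the approach of the cited source: show $\widetilde{M}\cdot K = I$ by a sign-reversing involution on pairs $(S,T)$. Your identification of the fixed point when $\lambda=\mu$ is also correct (the rim-hook containing the corner cell $(1,\lambda_1)$ starts in column $1$ and hence has length at least $\lambda_1$, forcing it to be exactly the top row; then induct). But what you have written is a plan, not a proof. The involution $\iota$ is never actually specified: ``a prescribed `defect','' ``disagrees \ldots\ in a specific way,'' and ``a pair of adjacent special rim-hooks \ldots\ that can be either merged \ldots\ or split'' are all placeholders, and you say yourself that making $\iota$ involutive is ``the principal obstacle'' still outstanding. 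Until a concrete merge/split rule is written down and verified to be involutive, sign-reversing, and type-preserving, there is no proof here. The construction in the E\u{g}ecio\u{g}lu--Remmel paper is genuinely delicate and does not reduce to a one-line scanning rule, so this gap is substantive rather than cosmetic.
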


\subsection{$\boldsymbol{\Mtrans(s,P)}$} 
\label{subsec:MsP}

Lascoux and Sch\"utzenberger~\cite{LS-chg} first discovered a
combinatorial formula for the ``$t$-Kostka matrix'' $\Mtrans(s,P)$ involving
the famous ``charge'' statistic; many details were subsequently
supplied by Butler~\cite{butler}.  Given a permutation $w$ of $[n]$, let
$\IDes(w)$ be the set of $k<n$ such that $k+1$ appears to the left of
$k$ in $w$, and let $\chg(w)=\sum_{k\in\IDes(w)} (n-k)$.

Next, let $v$ be a word of partition content (i.e., for all $k\geq 1$,
the number of $(k+1)$'s in $v$ is no greater than the number of $k$'s).
Extract one or more permutations from $v$ as follows. Scan $v$ from
left to right marking the first $1$, then the first $2$ after that, etc., 
returning to the beginning of $v$ when the right end is reached. Do this 
until the largest symbol has been marked. Remove the marked symbols
from $v$ (in the order they appear) to get the first permutation. Continue
to extract permutations in this way until all symbols of $v$ have been used,
and let $\chg(v)$ be the sum of the charges of the associated permutations.  
Finally, given a semistandard tableau $T$ of partition content,
let $w(T)$ be the word obtained by reading symbols row by row from
top to bottom (i.e., longest row first), reading each row from right to left.
Then define $\chg(T)=\chg(w(T))$.

\begin{theorem}\cite{butler,LS-chg}
For all $\lambda,\mu\in\Par_n$, $\Mtrans(s,P)_{\lambda,\mu}=\sum_T t^{\chg(T)}$
summed over all semistandard tableaux $T$ of shape $\lambda$ and content $\mu$.
\end{theorem}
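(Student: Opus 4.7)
My plan is to prove that the charge generating function $\tilde K_{\lambda,\mu}(t) := \sum_{T} t^{\chg(T)}$, summed over $T \in \ssyt_n(\lambda)$ of content $\mu$, equals $M(s,P)_{\lambda,\mu}(t)$ by showing both satisfy a common recursion with matching base case. The base case is $\mu = \lambda$: the only tableau is the row-superstandard one whose extracted permutations are all identities with $\chg = 0$, so $\tilde K_{\lambda,\lambda}(t) = 1$, which also equals $M(s,P)_{\lambda,\lambda}(t)$ since $M(s,P)$ is upper-unitriangular in dominance order (a consequence of $P_\mu|_{t=0} = s_\mu$).

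To derive an algebraic recursion, I would use the Hall pairing $\langle \cdot,\cdot \rangle_t$ under which $\{P_\lambda\}$ and $\{Q_\mu = b_\mu(t) P_\mu\}$ are dual, so that $M(s,P)_{\lambda,\mu}(t) = \langle s_\lambda, Q_\mu \rangle_t / b_\mu(t)$. The factorization $Q_\mu = Q_{(\mu_1)} Q_{(\mu_2)} \cdots Q_{(\mu_\ell)}$, combined with the Pieri rule for multiplication by $Q_{(r)}$ (which peels off a horizontal strip $\eta$ weighted by an explicit $t$-polynomial $\phi_\eta(t)$), yields a recursion expressing the coefficient for shape $\lambda$ and content $\mu$ in terms of analogous coefficients for skew shape $\lambda/\eta$ and truncated content $(\mu_1,\ldots,\mu_{\ell-1})$, summed over horizontal strips $\eta \subseteq \lambda$ of size $\mu_\ell$.

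On the combinatorial side, given $T \in \ssyt_n(\lambda)$ of content $\mu$, the entries equal to $\ell = \ell(\mu)$ form a horizontal strip $\eta \subseteq \lambda$, and removing them produces a tableau $T'$ of skew shape $\lambda/\eta$ and content $(\mu_1,\ldots,\mu_{\ell-1})$. I would show $\chg(T) = \chg(T') + w(\eta)$ for a strip statistic $w$ matching the Pieri weight. The key observation is that in the extraction-of-permutations procedure, each extracted permutation contains a unique letter $\ell$ whose position contributes to $\chg$ via the $\IDes$ count, and after removal the remaining permutations are exactly those extracted from $T'$ (possibly after a straightforward re-indexing of the reading word of $T'$). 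Summing over $T$ then gives a recursion for $\tilde K_{\lambda,\mu}(t)$ structurally identical to the algebraic one.

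The main obstacle is matching $\phi_\eta(t)$ to $t^{w(\eta)}$: showing that the weight of a horizontal strip in the Hall-Littlewood Pieri rule coincides exactly with the charge contribution of the associated $\ell$-entries. This is the deepest content of the Lascoux--Sch\"utzenberger theorem. In the original proof this compatibility is established via the cyclage operation on reading words, whose central lemma asserts that a cyclage step decreases $\chg$ by exactly $1$ and that cyclage equivalence classes parameterize the recursive data correctly. I would either verify the weight match directly via cyclage invariance or invoke the cyclage lemma as a black box, focusing on aligning the two recursions term by term; induction on $|\mu|$ then closes the argument.
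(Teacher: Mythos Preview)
The paper does not prove this theorem. It is stated in the review section (\S\ref{subsec:MsP}) with attribution to Lascoux and Sch\"utzenberger~\cite{LS-chg}; no argument is given, and none of the paper's new results depend on proving it. So there is no proof in the paper to compare your proposal against.

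That said, your sketch has real errors that would prevent it from working even as an independent proof. First, the factorization $Q_\mu = Q_{(\mu_1)}Q_{(\mu_2)}\cdots Q_{(\mu_\ell)}$ is false: the Hall--Littlewood $Q$-functions are not multiplicative in this sense (already $Q_{(1)}^2\neq Q_{(1,1)}$). The products $q_\mu:=\prod_i Q_{(\mu_i)}$ do form a basis, but $q_\mu\neq Q_\mu$, and pairing $s_\lambda$ against $q_\mu$ does not directly produce $K_{\lambda,\mu}(t)$. Second, the Pieri weight $\phi_{\lambda/\nu}(t)$ is a genuine polynomial in $t$ (a product of factors $(1-t^{m_i})$), not a monomial, so the proposed identity ``$\phi_\eta(t)=t^{w(\eta)}$'' cannot hold; there is no single-strip statistic $w(\eta)$ with that property. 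Third, removing the cells labeled $\ell$ from $T$ yields a tableau of \emph{straight} shape $\nu$ (where $\lambda/\nu$ is the horizontal strip), not of skew shape ``$\lambda/\eta$''.

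More fundamentally, the decomposition $\chg(T)=\chg(T')+w(\eta)$ you posit is essentially the whole theorem; the cyclage machinery you mention is not a lemma one can invoke as a black box while still claiming to give a proof---it \emph{is} the proof. A correct inductive approach along these lines exists (via the Morris recurrence or via cyclage/catabolism), but it requires substantially more than what you have outlined, and the algebraic recursion on the $M(s,P)$ side must be set up differently from what you wrote.
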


\subsection{$\boldsymbol{\Mtrans(P,s)}$} 
\label{subsec:MPs}

Carbonara~\cite{carbonara} gave a combinatorial reformulation
of~\eqref{eq:P-def} that describes entries of the ``inverse $t$-Kostka
matrix'' $\Mtrans(P,s)$ in terms of special tournament matrices.  An
$n\times n$ \emph{tournament matrix} is a matrix $B$ with entries in
$\{0,1\}$ such that $B_{i,i}=0$ for all $i$ and, for all $i\neq j$,
exactly one of $B_{i,j}$ and $B_{j,i}$ equals $1$. Given
$\lambda,\mu\in\Par$ having length at most $n$, we say the tournament
matrix $B$ has \emph{type $\lambda$ and shape $\mu$} iff the sequence
$(\lambda_i+\sum_{j=1}^n B_{i,j}:1\leq i\leq n)$ is a rearrangement of
the sequence $(\mu_i+n-i:1\leq i\leq n)$. Such a matrix is
\emph{special} (for $\lambda$) iff for all $i<j$ with
$\lambda_i=\lambda_j$, $B_{ij}=1$.  Let $\Gamma^*_{\lambda,\mu}$ be
the set of all $n\times n$ special tournament matrices of type
$\lambda$ and shape $\mu$, where $n=\max(\ell(\lambda),\ell(\mu))$.

We define signs and weights for $B\in\Gamma^*_{\lambda,\mu}$ as follows.
Since the entries of $(\mu_i+n-i:1\leq i\leq n)$ are distinct,
there is a unique $w\in \symmgp_n$ such that 
$\lambda_i+\sum_{j=1}^n B_{i,j}=\mu_{w(i)}+n-w(i)$ for $1\leq i\leq n$.
Define $\sgn(B)=\sgn(w)$, where $\sgn(w)$ is the usual sign of 
the permutation $w$. Define $\wt(B)=\sum_{i>j} B_{i,j}$, which is
the number of nonzero entries of $B$ below the diagonal.

\begin{theorem}~\cite[Theorem 2]{carbonara}
For all $\lambda,\mu\in\Par_n$, 
$\Mtrans(P,s)_{\lambda,\mu}=\sum_{B\in\Gamma^*_{\lambda,\mu}} \sgn(B)(-t)^{\wt(B)}$.
\end{theorem}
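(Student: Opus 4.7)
The plan is to start from the bialternant definition \eqref{eq:P-def} of $P_\lambda$ and expand everything in sight, identifying the coefficient of $s_\mu$ as a sum over tournament matrices. Since $R_{\lambda} = \prod_k [m_k]!_t \cdot P_{\lambda}$, it is convenient to first compute the $s_\mu$-expansion of $R_\lambda$ and then divide.

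First, I would distribute the product $\prod_{i<j}(x_{w(i)} - t x_{w(j)})$ factor by factor: from each pair $i<j$, pick either the ``positive'' term $x_{w(i)}$ or the ``negative'' term $-t x_{w(j)}$. Encode this choice by a tournament matrix $B$ on $\{1,\dots,N\}$: set $B_{ji}=1$ when $x_{w(i)}$ is chosen and $B_{ij}=1$ otherwise. Then the number of $-t$'s picked up is exactly $\wt(B)=\sum_{i>j}B_{ij}$. Substituting $y_i = x_{w(i)}$, a direct calculation shows that the exponent of $y_i$ in the resulting monomial equals $\lambda_i + r_i(B)$, where $r_i(B)=\sum_j B_{ij}$ is the $i$-th row sum.

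Next I would interchange the sum over $w\in\symmgp_N$ with the sum over $B$, recognizing $\sum_w \sgn(w)\prod_i x_{w(i)}^{\nu_i} = \det[x_i^{\nu_j}] = a_\nu(x_1,\dots,x_N)$. This gives
\begin{equation*}
\text{numerator of }R_\lambda \;=\; \sum_B (-t)^{\wt(B)}\, a_{\lambda + r(B)}(x_1,\dots,x_N),
\end{equation*}
summed over all $N\times N$ tournament matrices. If the composition $\lambda + r(B)$ has a repeated entry, $a_{\lambda+r(B)}=0$; otherwise a unique $w\in\symmgp_N$ sorts it into a strictly decreasing sequence, necessarily of the form $\mu+\delta_N$ for a unique partition $\mu$, and $a_{\lambda+r(B)}=\sgn(w)\,a_{\mu+\delta_N}$. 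Dividing by $a_{\delta_N}=\prod_{i<j}(x_i-x_j)$ converts each $a_{\mu+\delta_N}$ into $s_\mu$, yielding
\begin{equation*}
R_\lambda \;=\; \sum_\mu \Biggl(\sum_{B\in\Gamma_{\lambda,\mu}} \sgn(w(B))(-t)^{\wt(B)}\Biggr) s_\mu,
\end{equation*}
where $\Gamma_{\lambda,\mu}$ is the set of \emph{all} tournament matrices of type $\lambda$ and shape $\mu$ (not yet required to be special), and $w(B)$ is the induced sorting permutation. This matches the structure of the claimed formula, except that we are summing over $\Gamma_{\lambda,\mu}$ instead of $\Gamma^*_{\lambda,\mu}$ and have an extra factor of $\prod_k[m_k]!_t$ sitting in $R_\lambda$.

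The main obstacle, and the step requiring the most care, is therefore the combinatorial identity
\begin{equation*}
\sum_{B\in\Gamma_{\lambda,\mu}} \sgn(w(B))(-t)^{\wt(B)} \;=\; \Biggl(\prod_k [m_k]!_t\Biggr) \sum_{B^*\in\Gamma^*_{\lambda,\mu}} \sgn(w(B^*))(-t)^{\wt(B^*)}.
\end{equation*}
My approach would be to exploit the natural action of $\symmgp_\lambda=\prod_k \symmgp_{m_k}$ on $\Gamma_{\lambda,\mu}$ by simultaneous row/column permutation within each block of indices $I_c=\{i:\lambda_i=c\}$; this action preserves both the type and the shape, since permuting rows of equal $\lambda$-value only rearranges the multiset of row sums. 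For fixed inter-block entries, the intra-block restriction $B[I_c,I_c]$ is an arbitrary tournament on $m_c$ vertices. The ``special'' condition picks out the canonical intra-block representative in which all above-diagonal entries are $1$, contributing $0$ to $\wt$; summing $(-t)^{\wt}$ over all intra-block tournaments while tracking the sign change in $w(B)$ produced by reordering equal parts of $\lambda$ should collapse to the factor $\prod_k[m_k]!_t=\prod_k\sum_{\sigma\in\symmgp_{m_k}} t^{\inv(\sigma)}$. The delicate bookkeeping is that changing the intra-block tournament shifts intra-block row sums, which must be compensated by a sign coming from the sorting permutation $w(B)$; one must verify that these two effects interact exactly so as to reproduce the $t$-inversion statistic on $\symmgp_\lambda$ block by block. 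Once this factorization is established, dividing both sides by $\prod_k[m_k]!_t$ gives the claimed coefficient of $s_\mu$ in $P_\lambda$.
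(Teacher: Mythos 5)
First, note that the paper does not prove this statement at all: it is quoted from Carbonara \cite[Theorem 2]{carbonara} in a section reviewing known transition matrices, so there is no internal proof to compare against; your proposal has to stand on its own. Its first half is essentially right, up to a transposition slip in the encoding: with your convention ($B_{ji}=1$ when $x_{w(i)}$ is chosen, $B_{ij}=1$ otherwise) the exponent of $y_i$ is the \emph{column} sum of $B$ and the number of $-t$'s is the number of \emph{above}-diagonal ones, which does not match the definitions of type, shape and $\wt$ in \S\ref{subsec:MPs}. Putting $B_{ij}=1$ (for the pair $i<j$) when $x_{w(i)}$ is chosen and $B_{ji}=1$ when $-tx_{w(j)}$ is chosen fixes this, and then the identity ``numerator of $R_\lambda$ $=\sum_B(-t)^{\wt(B)}a_{\lambda+r(B)}$,'' the sorting of $\lambda+r(B)$ to $\mu+\delta_N$ with its sign, and the division by $a_{\delta_N}$ are all correct and give the $s$-expansion of $R_\lambda$ as a sum over \emph{all} tournament matrices.

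The genuine gap is the final factorization $\sum_{B\in\Gamma_{\lambda,\mu}}\sgn(w(B))(-t)^{\wt(B)}=\bigl(\prod_k[m_k]!_t\bigr)\sum_{B^*\in\Gamma^*_{\lambda,\mu}}\sgn(w(B^*))(-t)^{\wt(B^*)}$, which is exactly the content of Carbonara's theorem and which your sketch does not establish. The proposed mechanism---fix the inter-block entries and let the intra-block tournaments ``collapse to $[m_c]!_t$''---works only in the special situation where all rows of a block receive the same number of inter-block wins: then non-transitive intra-block tournaments produce repeated entries of $\lambda+r(B)$ (so their alternants vanish), and the $m_c!$ transitive ones reproduce $\sum_\sigma t^{\mathrm{inv}(\sigma)}=[m_c]!_t$ on a fixed shape $\mu$. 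In general, however, changing the intra-block tournament changes the multiset $\lambda+r(B)$ itself, so for a fixed inter-block configuration the intra-block terms land on \emph{different} shapes $\mu$ (and whether an alternant vanishes, as well as the sorting sign, depends on comparisons with entries coming from other blocks); there are also $2^{\binom{m_c}{2}}$ intra-block tournaments against only $m_c!$ terms of $[m_c]!_t$, so a sign-reversing cancellation is unavoidable and it cannot be confined to a block with the inter-block data frozen. So the ``delicate bookkeeping'' you defer is not bookkeeping: it is the theorem. To close the gap you would need an explicit involution on non-special tournaments (this is in substance what Carbonara does), or you could sidestep the factorization entirely by starting from Macdonald's coset formula $P_\lambda=\sum_{w\in\symmgp_N/\symmgp_N^{\lambda}}w\bigl(x^{\lambda}\prod_{\lambda_i>\lambda_j}\tfrac{x_i-tx_j}{x_i-x_j}\bigr)$, in which the division by $\prod_k[m_k]!_t$ has already been performed before any expansion into tournaments.
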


\subsection{$\boldsymbol{\Mtrans(P,m)}$}
\label{subsec:MPm} 

Macdonald~\cite[\S III.5, p. 229]{Macd} gives a formula
for the monomial expansion of \emph{skew Hall-Littlewood polynomials}
$P_{\lambda/\nu}(x_1,\ldots,x_N;t)$ which yields $\Mtrans(P,m)$ 
by taking $\nu=(0)$. We introduce the following combinatorial
model for Macdonald's formula.

Assume $\lambda/\nu$ is a skew shape with $N\geq\ell(\lambda)$.  Given
$T\in\ssyt_N(\lambda/\nu)$, define a cell $c$ with entry $v=T(c)$ to
be \emph{special} for $T$ iff $c$ is not in column 1 and there are no
$v$'s in the column of $T$ just left of $c$'s column. In this case,
the \emph{weight} of $c$ is the number of cells weakly below $c$ in
the column just left of $c$ that either have entries less than $v$ or
are part of the diagram for $\nu$.  Formally, the set of
\emph{special cells for $T$} is
\begin{equation*}
\spec(T)=\{ (i,j)\in\lambda/\nu: j>1\mbox{ and for all $u$
 with $(u,j-1)\in\lambda/\nu$, } T(u,j-1)\neq T(i,j) \}.
\end{equation*}
The \emph{weight} of a special cell $(i,j)$ is
\begin{multline*}
  \wt(i,j) = |\{(u,j-1)\in\lambda/\nu: u \geq i\mbox{ and
  } T(u,j-1)<T(i,j)\}|\\ + |\{(u,j-1)\in\nu/(0): u\geq i\}|.
\end{multline*}

Now define the set of \emph{starred semistandard tableaux}
\begin{equation*}
\ssyt_N^*(\lambda/\nu)=
\{(T,E):T\in\ssyt_N(\lambda/\nu)\mbox{ and }E\subseteq\spec(T)\}.
\end{equation*} 
A starred tableau $T^*=(T,E)$ has \emph{sign} $\sgn(T^*)=(-1)^{|E|}$,
\emph{$t$-weight} $\tstat(T^*)=\sum_{c\in E} \wt(c)$,  
\emph{content} $\cont(T^*)=\cont(T)$, \emph{$x$-weight} $x^{T^*}=x^T$,
and \emph{overall weight} $\sgn(T^*)t^{\tstat(T^*)}x^{T^*}.$ 

For $T\in\ssyt_N(\lambda/\nu)$, Macdonald defines
$\psi_T(t)=\prod_{c\in\spec(T)} (1-t^{\wt(c)})$. 
Then Macdonald's monomial expansion of the skew Hall-Littlewood
polynomials is
\begin{equation*} 
  P_{\lambda/\nu}(x_1,\ldots,x_N;t)
  =\sum_{T\in\ssyt_N(\lambda/\nu)} \psi_T(t)x^T.
\end{equation*}
Expanding the product in $\psi_T(t)$ using the distributive law, we
get $\sum_{E\subseteq\spec(T)}\prod_{c\in E} (-t^{\wt(c)})$. Comparing
to the overall weight of starred tableaux, we find that
\begin{equation}\label{eq:ssyt-sum}
 P_{\lambda/\nu}(x_1,\ldots,x_N;t)=\sum_{T^*\in \ssyt_N^*(\lambda/\nu)} 
   \sgn(T^*)t^{\tstat(T^*)}x^{T^*}. 
\end{equation}
Since $P_{\lambda/\nu}$ is known to be a symmetric polynomial,
we deduce the following formula.

\begin{theorem}
For all $\lambda,\mu\in\Par_n$, 
$\Mtrans(P,m)_{\lambda,\mu}=\sum_{T^*} \sgn(T^*)t^{\tstat(T^*)}$
summed over all starred semistandard tableaux $T^*$ of shape $\lambda$
and content $\mu$.  
\end{theorem}

\begin{example}\label{ex:sstd-elev} 
  Let $\lambda=8654$, $\nu=0$, $N\geq 8$, and
\begin{equation}\label{eq:Tex}
T=\tableau{
1&1&1&\underline{2}&2&\underline{4}&\underline{5}&5 \\
2&2&3&3&\underline{6}&\underline{8} \\
3&3&\underline{4}&4&\underline{7} \\
5&5&5&5}.
\end{equation}

In~\eqref{eq:Tex}, the special cells are indicated by the underlined entries.
Specifically, 
\begin{equation*}
  \spec(T)=\{(1,4),(1,6),(1,7),(2,5),(2,6),(3,3),(3,5)\}.
\end{equation*} 
These special cells have respective weights $1,1,1,3,2,1,2$. So $T$
contributes the term $(1-t)^4(1-t^2)^2(1-t^3)x^T$ to $P_{\lambda}$.
A typical starred tableau is $T^*=(T,\{(1,4),(1,6),(2,6)\})$.  It 
can be pictured as follows:
\begin{equation*}
T^*=\tableau{
1&1&1&2^*&2&4^*&5&5 \\
2&2&3&3&6&8^* \\
3&3&4&4&7 \\
5&5&5&5}.
\end{equation*} 
The overall weight of this object is
$(-1)^3t^{1+1+2}x_1^3x_2^4x_3^4x_4^3x_5^6x_6x_7x_8=-t^4x^T$.
\end{example} 

\subsection{$\boldsymbol{\Mtrans(s,\FQ)}$} 
\label{subsec:MsFQ}

The fundamental quasisymmetric expansion of Schur polynomials is a sum
over standard tableaux, rather than semistandard tableaux.  Given
$\lambda\in\Par_n$ and $S\in\syt(\lambda)$, define the \emph{descent
  set} $\Des(S)$ to be the set of $k<n$ such that $k+1$ appears in a
lower row of $S$ than $k$. Define the descent composition
$\Des'(S)=\comp(\Des(S))$
to be the composition associated to this subset of $[n-1]$. 
Gessel~\cite{gessel} first proved that 
\begin{equation*} 
 s_{\lambda}(x_1,\ldots,x_N)=\sum_{S\in\syt(\lambda)} 
   \FQ_{\Des'(S)}(x_1,\ldots,x_N). 
\end{equation*}
This formula can be proved bijectively by identifying the individual
monomials in $\FQ_{\Des'(S)}$ as the content monomials of semistandard
tableaux of shape $\lambda$ that ``standardize'' to $S$.  In terms
of transition matrices, we can state Gessel's result as follows.

\begin{theorem}
For all $\lambda\in\Par_n$ and $\alpha\in\Comp_n$, 
$\Mtrans(s,\FQ)_{\lambda,\alpha}$ is the number of standard tableaux with
shape $\lambda$ and descent set $\sub(\alpha)$.  
\end{theorem}

\subsection{$\boldsymbol{\Mtrans(m,M)}$}
\label{subsec:MmM}

For $\lambda\in\Par$, it is immediate that
$m_{\lambda}=\sum_{\alpha} M_{\alpha}$ summed over all compositions $\alpha$
whose parts can be sorted to give the parts of $\lambda$. Therefore:

\begin{theorem}
For all $\lambda\in\Par_n$ and $\alpha\in\Comp_n$, 
$\Mtrans(m,M)_{\lambda,\alpha}$ is 
$1$ if $\sort(\alpha)=\lambda$ and $0$ otherwise.  
\end{theorem}

\subsection{$\boldsymbol{\Mtrans(\FQ,M)}$ and $\boldsymbol{\Mtrans(M,\FQ)}$}
\label{subsec:MFQM-MMFQ}

\begin{theorem}
For all $\alpha,\beta\in\Comp_n$,
$\Mtrans(\FQ,M)_{\alpha,\beta}$ is $1$ if $\beta\succeq\alpha$ and $0$
otherwise; whereas
$\Mtrans(M,\FQ)_{\alpha,\beta}$ is $(-1)^{\ell(\beta)-\ell(\alpha)}$
if $\beta\succeq\alpha$ and $0$ otherwise.  
\end{theorem} 
\begin{proof} 
(sketch) Using~\eqref{eq:FQ-def}, one may verify that
$\FQ_{\alpha}=\sum_{\beta\succeq\alpha} M_{\beta}$,
which is equivalent to the stated formula for $\Mtrans(\FQ,M)_{\alpha,\beta}$.  
One obtains the formula for $\Mtrans(M,\FQ)=\Mtrans(\FQ,M)^{-1}$ using
the M\"obius inversion theorem on the poset of subsets of $[n-1]$ 
ordered by set inclusion.
\end{proof}

\subsection{$\boldsymbol{\Mtrans(G,\FQ)}$} 
\label{subsec:MGFQ}

Let $\alpha,\beta\in\Comp_n$ with $\beta$ finer than $\alpha$.  Say
$\ell(\alpha)=k$ and $\ell(\beta)=m$. By definition, there exist
indices $0 = i_0 < i_1 < \cdots < i_k = m$ such that $\alpha_j =
\beta_{i_{j-1}+1} + \cdots + \beta_{i_j}$ for $1\leq j\leq k$.  The
\emph{refining composition} $\Bre(\beta,\alpha) =
(i_1-i_0,i_2-i_1,\ldots,i_k-i_{k-1})$ records the number of parts of
$\beta$ derived from each part of $\alpha$. Define $s(\alpha,\beta) =
\sum_{j=1}^k j(i_j-i_{j-1}-1)$.  Note that in the notation
$\Bre(\beta,\alpha)$ from~\cite{hivert}, the finer composition is
listed \emph{first}, but in the function $s$ (and $g$, $\xi$ defined
in \S\ref{subsec:MFQG}), we list the finer composition \emph{second}.
This ordering is more convenient when working with transition
matrices. In~\cite[Theorem 6.6]{hivert}, Hivert showed that
for all $N\geq n$ and $\alpha\in\Comp_n$, 
  \begin{equation*}
  G_\alpha(x_1,\ldots,x_N;t) = \sum_{\beta:\beta \succeq \alpha}
  (-1)^{\ell(\beta)-\ell(\alpha)} t^{s(\alpha,\beta)} 
    \FQ_\beta(x_1,\ldots,x_N).
  \end{equation*}
In other words:

\begin{theorem}\label{thm:MGFQ}
For all $\alpha,\beta\in\Comp_n$,
$\Mtrans(G,\FQ)_{\alpha,\beta}$
is $(-1)^{\ell(\beta)-\ell(\alpha)} t^{s(\alpha,\beta)}$
if $\beta\succeq\alpha$ and $0$ otherwise.
\end{theorem}

\begin{example}
Take $\beta = 1221431211$ and $\alpha = 55314$.  Then
$\Bre(\beta,\alpha) = 32113$ and \[s(\alpha,\beta) = 1\cdot 2 + 2\cdot
1 + 3\cdot 0 + 4\cdot 0 + 5\cdot 2 = 14.\] So $\Mtrans(G,\FQ)_{\alpha,\beta} =
(-1)^5t^{14}$.
\end{example}

\subsection{$\boldsymbol{\Mtrans(G,M)}$}  
\label{subsec:MGM}

In~\cite[eq. (105)]{hivert}, Hivert showed that
for all $\alpha\in\Comp_n$ and $N\geq n$,
\[ G_{\alpha}(x_1,\ldots,x_N;t)
  =\sum_{\beta:\beta\succeq\alpha} M_{\beta}(x_1,\ldots,x_N;t)
    \prod_{i=1}^{\ell(\Bre(\beta,\alpha))} (1-t^i)^{\Bre(\beta,\alpha)_i-1}. \] 
Hence:
\begin{theorem} 
For all $\alpha,\beta\in\Comp_n$,
$\Mtrans(G,M)_{\alpha,\beta}$ is
$\prod_{i=1}^{\ell(\Bre(\beta,\alpha))} (1-t^i)^{\Bre(\beta,\alpha)_i-1}$ 
if $\beta\succeq\alpha$ and $0$ otherwise.  
\end{theorem}

\subsection{$\boldsymbol{\Mtrans(\mathcal{S},M)}$}

As defined in~\cite{hlmvw}, $\mathcal{S}_\alpha=\sum_T x^T$ 
summed over all $\SSCT$ of shape $\alpha$. So:
\begin{theorem} 
Let $\SSCT(\alpha,\beta)$ denote the set of all semistandard composition
tableaux of shape $\alpha$ and content $\beta$.
For all $\alpha,\beta\in\Comp_n$,
$\Mtrans(\mathcal{S},M)_{\alpha,\beta}=|\SSCT(\alpha,\beta)|$.  
\end{theorem}

\subsection{$\boldsymbol{\Mtrans(\mathcal{S},F)}$}

A \emph{standard composition tableau ($\SCT$)} $T$ is a $\SSCT$ with content
$(1,1,\ldots,1)$. The \emph{descent set} $\Des(T)$ is the set of those
$i$ that lie weakly to the left of $i+1$. The \emph{descent composition}
for $T$ is $\comp(\Des(T))$.  
For example, given
\begin{equation*}
  \Yvcentermath1 T=\young(85421,93,{\fifteen}{\twelve}{\eleven}{\ten}76,{\seventeen}{\sixteen}{\fourteen}{\thirteen}),
\end{equation*}
$\Des(T) = \{3,5,8,9,12,15\}$ and $\comp(\Des(T))=(3,2,3,1,3,3,2)$.

\begin{theorem}\cite[Theorem 6.2]{hlmvw}\label{thm:msf}
Let $\SCT(\alpha,\beta)$ denote the set of all standard composition
tableaux of shape $\alpha$ and descent composition $\beta$.
For all $\alpha,\beta\in\Comp_n$,
$\Mtrans(\mathcal{S},F)_{\alpha,\beta} = |\SCT(\alpha,\beta)|.$ 
\end{theorem}

\begin{example}
  Consider the $\FQ$-expansion of $\mathcal{S}_{13}$.  The only two
  $\SCT$ of shape $13$ are
  \begin{equation*}
    \Yvcentermath1 \young(1,432) \qquad \textrm{and} \qquad \young(2,431),
  \end{equation*}
  with descent compositions $13$ and $22$. So
  $\mathcal{S}_{13} = \FQ_{13} + \FQ_{22}$.
\end{example}

\subsection{$\boldsymbol{\Mtrans(K,F)}$ and $\boldsymbol{\Mtrans(K,M)}$}

Stembridge \cite{Ste2} expressed the peak quasisymmetric functions as
sums of both fundamental and monomial quasisymmetric functions.
\begin{theorem}\cite[Proposition 3.5]{Ste2}
For all $\alpha\in\Comp'_n$ and $\beta\in\Comp_n$,
  \begin{align}
    \Mtrans(K,F)_{\alpha,\beta} &= 
    \begin{cases} 1, &\textrm{ if } 
      \sub(\alpha) \subseteq \sub(\beta) \bigtriangleup (\sub(\beta)+1) ;\\ 
      0, & \text{ otherwise. }
    \end{cases}\\
    \Mtrans(K,M)_{\alpha,\beta} &= 
    \begin{cases} 2^{\ell(\beta)-\ell(\alpha)}, & 
      \textrm{ if } \sub(\alpha) \subseteq \sub(\beta) \cup (\sub(\beta)+1) ;\\
      0, & \text{ otherwise.}
    \end{cases}
  \end{align}
\end{theorem}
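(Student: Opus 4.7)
The first identity is immediate from the definition of $K_\alpha$ adopted in this paper: the coefficient of $\FQ_\beta$ in the defining sum is exactly the indicator of $\sub(\alpha) \subseteq \sub(\beta) \bigtriangleup (\sub(\beta)+1)$, which is the stated formula for $M(K,F)_{\alpha,\beta}$.

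For $M(K,M)$, my plan is to substitute the expansion $\FQ_\beta = \sum_{\gamma \succeq \beta} M_\gamma$ from~\S\ref{subsec:MFQM-MMFQ} into the defining formula for $K_\alpha$ and interchange the order of summation. Writing $A = \sub(\alpha)$ and $C = \sub(\gamma)$, and using that $\gamma \succeq \beta$ is equivalent to $\sub(\beta) \subseteq C$, this yields $K_\alpha = \sum_{\gamma} c_\gamma M_\gamma$ where
\[
  c_\gamma = |\{B \subseteq C : A \subseteq B \bigtriangleup (B+1)\}|.
\]
Since $\ell(\gamma) - \ell(\alpha) = |C| - |A|$, the stated formula for $M(K,M)_{\alpha,\beta}$ reduces to showing that $c_\gamma$ equals $2^{|C|-|A|}$ when $A \subseteq C \cup (C+1)$ and $0$ otherwise.

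The crucial input is the hypothesis $\alpha \in \Comp'_n$, which (as observed in~\S\ref{subsec:qsym-bases}) means that $A$ has no two consecutive elements. Consequently the pairs $P_a := \{a-1, a\} \cap [n-1]$ for $a \in A$ are pairwise disjoint, and the constraint $A \subseteq B \bigtriangleup (B+1)$ decouples into independent local conditions: for each $a \in A$, exactly one of $a-1$ and $a$ must lie in $B$. Imposing $B \subseteq C$ then gives $|C \cap P_a|$ legal choices on each $P_a$, while $B \cap E$ (for $E := [n-1] \setminus \bigcup_{a \in A} P_a$) may be taken to be any subset of $C \cap E$, contributing $2^{|C \cap E|}$ further choices. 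Hence
\[
  c_\gamma = \prod_{a \in A} |C \cap P_a| \cdot 2^{|C \cap E|}.
\]

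To finish, I would note that $|C \cap P_a| \geq 1$ for every $a \in A$ is precisely the condition $A \subseteq C \cup (C+1)$; if it fails, some factor in the product vanishes and $c_\gamma = 0$. When it holds, $|C \cap P_a| \in \{1,2\}$, so $|C \cap P_a| = 2^{|C \cap P_a|-1}$, and the disjoint decomposition $|C| = \sum_{a \in A} |C \cap P_a| + |C \cap E|$ delivers $c_\gamma = 2^{|C|-|A|}$ as required. No real obstacle is expected here: the argument is essentially bookkeeping once one observes that the $\Comp'_n$ hypothesis on $\alpha$ converts the XOR-type constraints into an independent product over disjoint pairs.
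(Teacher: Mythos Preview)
Your argument is correct. The first identity is indeed tautological given the paper's chosen definition of $K_\alpha$, and for the second you have carefully used the hypothesis $\alpha\in\Comp'_n$ to ensure $1\notin A$ and that the pairs $P_a=\{a-1,a\}$ are genuinely disjoint two-element subsets of $[n-1]$, so the constraint $A\subseteq B\bigtriangleup(B+1)$ factors as claimed. The bookkeeping $|C|-|A|=(\ell(\gamma)-1)-(\ell(\alpha)-1)=\ell(\gamma)-\ell(\alpha)$ then closes the count.

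Note, however, that the paper does not give its own proof of this theorem: it is stated as a quotation of \cite[Proposition~3.5]{Ste2}, with the $M(K,F)$ formula already built into the paper's working definition of $K_\alpha$. So there is nothing to compare against; your write-up supplies a self-contained derivation of the $M(K,M)$ formula that the paper simply cites.
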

\begin{example}
\begin{equation*}
  K_{31}=\FQ_{31}+\FQ_{22}+\FQ_{121}+\FQ_{112} 
= M_{31}+M_{22}+2M_{211}+2M_{121}+2M_{112}+4M_{1111}.  
\end{equation*}
\end{example}

\section{$\FQ$-expansion of Skew Hall-Littlewood Polynomials}
\label{sec:Fexp}

\subsection{Expansion of $P_{\lambda/\mu}$}
\label{subsec:MPFQ-skew}

Recall from~\S\ref{subsec:MPm} the combinatorial formula~\eqref{eq:ssyt-sum}
for the monomial expansion of the skew Hall-Littlewood polynomials
$P_{\lambda/\mu}(x_1,\ldots,x_N;t)$.  
This section converts this formula to an expansion of these polynomials
in terms of the fundamental quasisymmetric basis. In particular, this
provides a combinatorial interpretation for the entries of $\Mtrans(P,\FQ)$.
We remark that one can also obtain $\Mtrans(P,\FQ)$ by
multiplying the known matrices $\Mtrans(P,s)$ and $\Mtrans(s,\FQ)$.  However, 
this produces a quite complicated interpretation for the coefficients
in $\Mtrans(P,\FQ)$ as signed combinations of standard tableaux and
special tournaments. The new interpretation developed below is much simpler.

To state our result, we need a few more definitions.  Given a skew
diagram $\lambda/\mu$ with $n$ cells, let $\syt^*(\lambda/\mu)$ be the
set of starred tableaux $S^*=(S,E)$ such that $S$ is a
standard tableau of shape $\lambda/\mu$.  In this case, observe that
$\spec(S)$ consists of all cells in the diagram not in column 1.  So
$E$ can be an arbitrary subset of cells of $\lambda/\mu$ not in column
1. Define the \emph{ascent set} of $S^*$, denoted $\Asc(S^*)$, to be
the set of all $k<n$ such that either (a) $k+1$ appears in $S$ in a
lower row than $k$, or (b) there exist $u,i,j$ with $S(u,j-1)=k$,
$S(i,j)=k+1$, and $(i,j)\in E$.  The second alternative says that
$k+1$ appears in a cell of $E$ located in the next column after the
column containing $k$.  Define $\Asc'(S^*)=\comp(\Asc(S^*))$ to be the
associated composition.

\begin{theorem}\label{thm:skewP-to-FQ}
For all skew shapes $\lambda/\mu$, 
\begin{equation*} 
 P_{\lambda/\mu}(x_1,\ldots,x_N;t)
  =\sum_{S^*\in\syt^*(\lambda/\mu)} \sgn(S^*)t^{\tstat(S^*)}
     \FQ_{\Asc'(S^*)}(x_1,\ldots,x_N).
\end{equation*}
In particular, for all $\lambda\in\Par_n$ and $\alpha\in\Comp_n$,
$\Mtrans(P,\FQ)_{\lambda,\alpha}=\sum_{S^*} \sgn(S^*)t^{\tstat(S^*)}$
summed over all starred standard tableaux $S^*$ having shape $\lambda$
and ascent set $\sub(\alpha)$.
\end{theorem}
\begin{proof}
Let $Y$ be the set of pairs $(S^*,w)$ where $S^*=(S,E)\in\syt^*(\lambda/\mu)$
and $w$ is an $\Asc(S^*)$-compatible word of length $n$
(see~\S\ref{subsec:qsym-bases}). The overall weight of $(S^*,w)\in Y$
is $\sgn(S^*)t^{\tstat(S^*)}x^w$.  
Keeping in mind the definition of $\FQ_{\alpha}$, 
we see that the generating function for the weighted set $Y$ is 
\begin{equation}\label{eq:Ysum}
  \sum_{y\in Y} \wt(y)
 =\sum_{S^*\in\syt^*(\lambda/\mu)} 
\sgn(S^*)t^{\tstat(S^*)}\FQ_{\Asc'(S^*)}(x_1,\ldots,x_N).
\end{equation}

Comparing~\eqref{eq:ssyt-sum} and~\eqref{eq:Ysum},
the theorem will be proved if we can construct
a sign-preserving, weight-preserving bijection 
$f:\ssyt^*(\lambda/\mu)\rightarrow Y$.

\begin{example}\label{ex:std-elev} 
  Let $\lambda=65211$, and consider the starred standard tableau
  \begin{equation*}
\cellsize=3ex
S^*=\tableau{
    1&2&3&8^*&9&{15\!}^* \\
    4&6&7^*&{13\!}^*&14 \\
    5&12\\
    10\\
    11}.
\end{equation*}
For this object, $E=\{(1,4),(1,6),(2,3),(2,4)\}$, 
  $\Asc(S^*)=\{3,4,6,7,9,10,14\}$,  and $\Asc'(S^*)=31212141$.
A typical object in $Y$ is $(S^*,w)$ where 
\[ w=111<2<33<4<55<6<7888<9. \] The marked ascents in
  $w$ are mandated for $w$ to be $\Asc(S^*)$-compatible, and there
  is an extra ascent between $w_{11}=7$ and $w_{12}=8$.  The overall
  weight of $(S^*,w)$ is
  $$(-1)^4t^{2+2+1+1}x_1^3x_2x_3^2x_4x_5^2x_6x_7x_8^3x_9=t^6x^w.$$
\end{example}

Continuing the proof, we must define a ``standardization map''
$f:\ssyt^*(\lambda/\mu)\rightarrow Y$ and an ``unstandardization map''
$g:Y\rightarrow \ssyt^*(\lambda/\mu)$ such that $f$ preserves signs and 
weights, $f\circ g=\id_Y$, and $g\circ f=\id_{\ssyt^*(\lambda/\mu)}$. 
Given a semistandard tableau $T\in\ssyt_N(\lambda/\mu)$, recall
that there is a standard tableau $\stdz(T)$, called the \emph{standardization 
of $T$}, defined as follows.
Suppose the entries in $T$ consist of $m_1$ ones, $m_2$ twos, etc.
Define $M_0=0$ and $M_i=m_1+m_2+\cdots+m_i$ for $1\leq i\leq N$.
We obtain $\stdz(T)$ from $T$ by replacing the $m_i$ occurrences of $i$
in $T$, from left to right, by the integers $M_{i-1}+1,M_{i-1}+2,
\ldots,M_i$. Now, given $T^*=(T,E)\in\ssyt^*(\lambda/\mu)$,
define $f(T^*)=(S^*,w)=((\stdz(T),E),w)$, where $w$ consists of the symbols 
in $T$ in increasing order.

We must check that $f(T^*)\in Y$ for all $T^*=(T,E)\in\ssyt^*(\lambda/\mu)$.
First, $S=\stdz(T)$ is a standard tableau of shape $\lambda/\mu$.
Second, since $E\subseteq\spec(T)$, every cell of $E$ is not
in column 1, and so $E\subseteq\spec(S)$. Third, we claim $w$
is an $\Asc(S^*)$-compatible sequence. To check this,
assume $k<n$ and $w_k=w_{k+1}=v$. By definition of standardization,
the unique occurrences of $k$ and $k+1$ in $S$ were used to relabel cells
that both originally contained occurrences of $v$ in $T$. Since the
horizontal strip of cells in $T$ containing $v$ gets relabeled from
left to right, $k+1$ cannot appear in $S$ in a lower row than $k$.
So condition (a) in the definition of $\Asc(S^*)$ does not hold for $k$.
Can condition (b) hold for $k$? If so, there are $u,i,j$ with
$S(u,j-1)=k$ and $S(i,j)=k+1$ and $(i,j)\in E$. But then
$T(u,j-1)=v=T(i,j)$ means that $(i,j)$ is not a special cell
for $T$, which contradicts $E\subseteq\spec(T)$. So we conclude
that $k\not\in\Asc(S^*)$, proving the claim.

With notation as above, observe that $x^{T^*}=x^T=x^w$, so that $f$
preserves the $x$-weight. Since applying $f$ does not change $E$,
$f$ preserves signs. Finally, suppose $(i,j)\in E$ with $T(i,j)=v$.
Since $v$ cannot appear in column $j-1$ of $T$, it follows from
the definition of standardization that
the cells contributing to $\wt(i,j)$ in the computation of
$\tstat(T^*)$ are precisely the cells contributing to $\wt(i,j)$
in the computation of $\tstat(S^*)$. So, $f$ preserves the $t$-weight.

All that remains is to define the two-sided inverse $g$
for $f$. Given $(S^*,w)=((S,E),w)\in Y$, suppose $w$ consists of $m_1$ ones
followed by $m_2$ twos, etc. With $M_i$ defined as before, let 
$T:\lambda/\mu\rightarrow [N]$ be obtained from $S$ by replacing
all symbols $M_{i-1}+1,\ldots,M_i$ by $i$'s, for $1\leq i\leq N$.
Then set $g(S^*,w)=(T,E)$. We must check that $(T,E)$ lies in the
codomain $X$. Note that $w$ is $\Asc(S^*)$-compatible, so
$w_k=w_{k+1}$ implies that conditions (a) and (b) in the definition
of ascent set are both false for this $k$. The falsehood of condition (a)
ensures that $T$ will be a semistandard tableau. On the other hand,
the falsehood of condition (b) guarantees that every cell of $E$
is special for $T$ (not just special for $S$). So $(T,E)\in X$
as needed. 

Knowing that $f$ and $g$ do map into their stated codomains, it is 
now immediate that $f\circ g=\id_Y$ and $g\circ f=\id_{\ssyt^*(\lambda/\mu)}$. 
(This verification does not involve the sets $E$ or the ascent sets; one
only needs the fact that the usual standardization of a semistandard tableau
is reversible if the content word $w$ is known). So the proof is complete.
\end{proof}

\begin{example}
  Applying $f$ to the starred semistandard tableau $T^*$ from
  Example~\ref{ex:sstd-elev} gives $f(T^*)=(S^*,w)$, where
  \begin{equation*}
    \cellsize=3ex
    S^*=\tableau{
    1&2&3&{6}^*&7&{14\!}^*&19&20 \\
    4&5&10&11&21&{23\!}^* \\
    8&9&12&13&22 \\
    15&16&17&18}
\end{equation*}
 and $w=11122223333444555555678$.
  Observe that $w$ is $\Asc(S^*)$-compatible, since  
  $\Asc(S^*)=\{3,7,11,14,20,21,22\}$. Moreover, $g(S^*,w)=T^*$.
\end{example}

\cellsize=2.5ex
\begin{example}
  Applying $g$ to the object $(S^*,w)\in Y$ from
  Example~\ref{ex:std-elev} gives the starred semistandard tableau
  \begin{equation*}
T^*=\tableau{
    1&1&1&{5}^*&5&{9}^* \\
    2&3&{4}^*&{8}^*&8 \\
    3&8\\
    6\\
    7}.
\end{equation*}
 Note that, as required, $T$ is semistandard and $E$ does
  consist of special cells for $T$. Moreover, $f(T^*)=(S^*,w)$.
\end{example}

\begin{example}
  Using Theorem~\ref{thm:skewP-to-FQ}, we can make the following
  calculation.  Each term corresponds to the starred standard
  tableau shown below it:

\begin{alignat*}{7}
  P_{21}(t) = \ \ & \FQ_{21} \quad - \quad && t\FQ_{111} \quad + \quad && \FQ_{12} \quad - \quad && t^2\FQ_{111}. \\
             & \tableau{1&2\\3}	&& \tableau{1&{2}^*\\3} && \tableau{1&3\\2} && \tableau{1&{3}^*\\2}
\end{alignat*}
\end{example}

\begin{remark}[Alternate Formula for $\Mtrans(P,s)$]
As described in~\S\ref{subsec:MPs}, Carbonara~\cite{carbonara}
expresses the entries of the inverse $t$-Kostka matrix $\Mtrans(P,s)$ as 
signed, weighted sums of
  special tournament matrices.  An alternative description can be
  obtained by following $\Mtrans(P,\FQ)$ by the projection from $\QSym$ to
  $\Sym$ given in~\cite{elw}.  The entry of $\Mtrans(P,s)_{\lambda,\mu}$ is
  again described as a sum of signed, weighted objects.  However, in
  this description the objects are pairs $(S^*,T)$ where $S^*\in
  \syt^*(\lambda)$ and $T$ is a ``flat special rim-hook tableau'' of
  shape $\mu$ and content $\Asc'(S^*)$.

  In addition to working for skew Hall-Littlewood polynomials, this
  new description may have computational advantages.  For $n=4$, there
  are 37 special tournament matrices that contribute to the
  calculation of $\Mtrans(P,s)$.  However, only 23 pairs $(S^*,T)$ are now
  needed.  We note that these pairs do not correspond to a subclass of
  special tournament matrices in any simple way.  Carbonara's
  description computes the value $\Mtrans(P,s)_{4,22} = 0$ via the fact that
  there are no special tournament matrices with parameters $\lambda =
  4$ and $\mu = 22$.  There are two such pairs $(S^*,T)$, albeit
  of opposite sign and equal weight.
\end{remark}

\subsection{Expansion of $Q_{\lambda/\mu}$}
\label{subsec:MQFQ-skew}

We recall the definition of the skew Hall-Littlewood polynomials
$Q_{\lambda/\mu}(x_1,\ldots,x_N;t)$~\cite[\S III.5]{Macd}.
For $r\in\N^+$, let $\phi_r(t)=(1-t)(1-t^2)\cdots (1-t^r)$.
For a partition $\lambda$ with $m_i(\lambda)$ parts equal to $i$ for
each $i$, define $b_{\lambda}(t)=\prod_{i\geq 1} \phi_{m_i(\lambda)}(t)$.
For partitions $\mu\subseteq\lambda$, define
$b_{\lambda/\mu}(t)=b_{\lambda}(t)/b_{\mu}(t)$.  Finally, define
\begin{equation*} 
Q_{\lambda/\mu}(x_1,\ldots,x_N;t)
=b_{\lambda/\mu}(t)P_{\lambda/\mu}(x_1,\ldots,x_N;t).
\end{equation*}
From this, Theorem~\ref{thm:skewP-to-FQ} immediately gives 
\[ Q_{\lambda/\mu}(x_1,\ldots,x_N;t)
  =\sum_{S^*\in\syt^*(\lambda/\mu)} \sgn(S^*)t^{\tstat(S^*)}
    b_{\lambda/\mu}(t)\FQ_{\Asc'(S^*)}(x_1,\ldots,x_N). \]

On the other hand, Macdonald~\cite[\S III.5, pp. 227--229]{Macd} gives 
the following monomial expansion of $Q_{\lambda/\mu}$. Suppose
$\nu\subseteq\rho$ are partitions such that $\theta=\rho/\nu$ 
is a horizontal strip. Let $\theta_i'$ be the number of cells
of $\theta$ in column $i$, so $\theta_i'\in\{0,1\}$ since $\theta$
is a horizontal strip.  Define
\begin{equation*}
\phi_{\rho/\nu}(t)=\prod_{i\in I} (1-t^{m_i(\rho)})
\end{equation*} 
where $I=\{i\geq 1: \theta_i'=1\mbox{ and }\theta_{i+1}'=0\}$.  For
$T\in\ssyt_N(\lambda/\mu)$, view $T$ as a nested sequence of
partitions $\mu=\lambda^{(0)}\subseteq\lambda^{(1)}
\subseteq\cdots\subseteq \lambda^{(N)}=\lambda$, where
$\lambda^{(i)}/\lambda^{(i-1)}$ is a horizontal strip consisting of
the cells in $T$ with entry $i$, and let $\phi_T(t)=\prod_{i=1}^N
\phi_{\lambda^{(i)}/\lambda^{(i-1)}}(t)$.  Macdonald's expansion for
$Q_{\lambda/\mu}$ is
\begin{equation}\label{eq:HLQ2}
 Q_{\lambda/\mu}(x_1,\ldots,x_N;t)=\sum_{T\in\ssyt(\lambda/\mu)} \phi_T(t)x^T.
\end{equation}
By imitating the proof we gave in~\S\ref{subsec:MPFQ-skew},
we can use~\eqref{eq:HLQ2} to derive an alternative
fundamental quasisymmetric expansion for $Q_{\lambda/\mu}$
(and hence also for $P_{\lambda/\mu}$).

We merely sketch the necessary changes in the first formula and its proof.  
For a tableau $T\in\ssyt_N(\lambda/\mu)$, define the set of
\emph{$Q$-special cells for $T$} 
by \begin{equation*}
\QSp(T)=\{(i,j)\in\lambda/\mu:\mbox{ for all $u\in\lambda/\mu$, }
 T(u,j+1)\neq T(i,j) \}.
\end{equation*}

So, a cell $c$ is $Q$-special for $T$ iff the entry of $T$ in $c$
does not also appear in the column immediately to the right of $c$.
The \emph{weight} of a $Q$-special cell $(i,j)$ is defined to be
\begin{equation*}
\wt_Q(i,j)=m_j(\lambda^{(T(i,j))})= i-|\{u: (u,j+1)\in\mu/(0)\mbox{ or }
 T(u,j+1)\leq T(i,j)\}|.
\end{equation*}

Now let $\ssyt^{*Q}_N(\lambda/\mu)=\{(T,E):T\in\ssyt_N(\lambda/\mu)
\mbox{ and }E\subseteq\QSp(T)\}$ be the set of \emph{$Q$-starred
semistandard tableaux}.  Since $\phi_T(t)=\prod_{c\in\QSp(T)} (1-t^{\wt_Q(c)})$,
it follows as before that
\begin{equation*}
Q_{\lambda/\mu}(x_1,\ldots,x_N;t)=\sum_{T^*\in\ssyt^{*Q}_N(\lambda/\mu)} 
\sgn(T^*)t^{\tstat_Q(T^*)}x^{T^*},
\end{equation*} 
where $\sgn(T^*)=(-1)^{|E|}$, $\tstat_Q(T^*)=\sum_{c\in E} \wt_Q(c)$,
and $x^{T^*}=x^T$.
Define $Q$-starred standard tableaux in the obvious way.
For $S\in\syt(\lambda/\mu)$, observe that \emph{every} cell in $S$
is $Q$-special for $S$. Given $S^*=(S,E)$ with $E\subseteq\lambda/\mu$
and $k<n=|\lambda/\mu|$, define $k\in\QAsc(S^*)$ iff either 
(a) $k+1$ appears in $S$ in a lower row than $k$; or 
(b) $k\in E$ and $k+1$ appears in $S$ in
the next column after $k$'s column.  Let $\QAsc'(S^*)=\comp(\QAsc(S^*))$.  
Define $Y$ as before, replacing $\Asc(S^*)$ by $\QAsc(S^*)$. 
One should now check that the proof given above adapts
to the present situation without difficulty. We therefore have
the following result.
\begin{theorem}\label{thm:skewQ-to-FQ}
For all skew shapes $\lambda/\mu$, 
\[ Q_{\lambda/\mu}(x_1,\ldots,x_N;t)
  =\sum_{S^*\in\syt^{*Q}(\lambda/\mu)} \sgn(S^*)t^{\tstat_Q(S^*)}
     \FQ_{\QAsc'(S^*)}(x_1,\ldots,x_N). \]
In particular, for all $\lambda\in\Par_n$ and $\alpha\in\Comp_n$,
$\Mtrans(Q,\FQ)_{\lambda,\alpha}=\sum_{S^*} \sgn(S^*)t^{\tstat_Q(S^*)}$
summed over all $Q$-starred standard tableaux having shape $\lambda$
and $Q$-ascent set $\sub(\alpha)$.
\end{theorem}
Dividing through by $b_{\lambda/\mu}(t)$ gives an analogous
$\FQ$-expansion for $P_{\lambda/\mu}$.

\section{Transition Matrices Giving $G$-Expansions}
\label{sec:Gexp}

This section discusses combinatorial formulas for the transition
matrices $\Mtrans(\FQ,G)$, $\Mtrans(M,G)$, $\Mtrans(P,G)$,
$\Mtrans(\mathcal{S},G)$ and $\Mtrans(K,G)$.

\subsection{$\boldsymbol{\Mtrans(\FQ,G)}$}
\label{subsec:MFQG}

Let $\alpha,\beta\in\Comp_n$ with $\beta$ finer than $\alpha$.
Define $\xi_{\alpha,\beta}(j)$ to be $j$ if $\beta_j$ and
$\beta_{j+1}$ are formed from the same part of $\alpha$ and $0$
otherwise. Set $g(\alpha,\beta) = \sum_{j=1}^{\ell(\beta)-1} 
\xi_{\alpha,\beta}(j)$.  

\begin{theorem}\label{thm:MFQG}\label{thm:mfg}
For all $\alpha,\beta\in\Comp_n$,
$\Mtrans(\FQ,G)_{\alpha,\beta}$ is $t^{g(\alpha,\beta)}$ if
$\beta\succeq\alpha$ and $0$ otherwise.
\end{theorem}
\begin{proof}
Since $\Mtrans(\FQ,G)$ is the unique matrix such that $\Mtrans(G,\FQ)\Mtrans(\FQ,G)=I$,
it is enough (by Theorem~\ref{thm:MGFQ}) to show that 
for all compositions $\beta\succeq\alpha$,
  \begin{equation}\label{eq:chi}
    \sum_{\gamma:\, \beta \succeq \gamma \succeq \alpha} 
    (-1)^{\ell(\gamma)-\ell(\alpha)} t^{s(\alpha,\gamma)} t^{g(\gamma,\beta)}
=\delta_{\alpha,\beta}.
  \end{equation}
  Recall that $s(\alpha,\gamma)=\sum_{j=1}^{\ell(\alpha)}
  j(\Bre(\gamma,\alpha)_j-1)$.  If $\alpha = \beta$ then there is a
  single term that is readily seen to equal $1$.  So suppose $\beta
  \succ \alpha$. We define a sign-reversing involution
  $\gamma\mapsto\gamma'$ on the set of compositions $\gamma$ with
  $\beta\succeq\gamma\succeq\alpha$, as follows.  Let $j$ be as small
  as possible such that $\Bre(\beta,\gamma)_j > 1$ or
  $\Bre(\gamma,\alpha)_j > 1$.  Equivalently, this is the smallest $j$
  such that not all of $\alpha_j$, $\beta_j$ and $\gamma_j$ are equal.
  We know that such a $j$ must exist since $\alpha \neq \beta$.

  If $\gamma_j = \beta_j$ (and hence $\beta_j<\alpha_j$), then set
  \begin{equation}\label{eq:join}
    \gamma' = (\gamma_1,\ldots,\gamma_{j-1},\gamma_j+\gamma_{j+1},
    \gamma_{j+2},\ldots,\gamma_{\ell(\gamma)}).
  \end{equation}
  Otherwise, let 
  \begin{equation}\label{eq:split}
    \gamma' = (\gamma_1,\ldots,\gamma_{j-1},\beta_j,\gamma_j-\beta_j,
    \gamma_{j+1},\ldots,\gamma_{\ell(\gamma)}).
  \end{equation}
  If $\gamma'$ is defined according to~\eqref{eq:join}, then
  $s(\alpha,\gamma') = s(\alpha,\gamma) - j$ and $g(\gamma',\beta) =
  g(\gamma,\beta) + j$.  On the other hand, if $\gamma'$ is defined
  according to~\eqref{eq:split}, then $s(\alpha,\gamma') =
  s(\alpha,\gamma) + j$ and $g(\gamma',\beta) = g(\gamma,\beta) - j$.
  It follows immediately that the map $\gamma \mapsto \gamma'$ is a
  sign-reversing, $t$-weight preserving involution on the 
  set of $\gamma$ for which $\beta \succeq \gamma \succeq \alpha$.
  Hence, the sum in~\eqref{eq:chi} is zero as desired.
\end{proof}

\begin{example}
  Let $\alpha = 212135$, $\gamma = 2121323$ and $\beta = 212112212$.
  Note that $\Bre(\gamma,\alpha) = 111112$ and $\Bre(\beta,\gamma) =
  1111212$.  It follows that $s(\alpha,\gamma) = 6\cdot (2-1) = 6$ and
  $g(\gamma,\beta) = 5 + 8 = 13$.  The smallest $j$ for which the
  parts are not all equal is $j=5$.  The parts $\gamma_5$ and
  $\beta_5$ are not equal, so $\gamma'$ gets defined according
  to~\eqref{eq:split}: $\gamma' = 21211223$.  Hence
  $\Bre(\gamma',\alpha) = 111122$ and $\Bre(\beta,\gamma') =
  11111112$.  So $s(\alpha,\gamma') = 5\cdot 1 + 6\cdot 1 = 11$ and
  $g(\gamma',\beta) = 8$.
\end{example}

\begin{example}
Using Theorem~\ref{thm:MFQG}, we calculate 
$\FQ_{3}=G_3+tG_{21}+tG_{12}+t^3G_{111}$,
$\FQ_{21}=G_{21}+tG_{111}$, $\FQ_{12}=G_{12}+t^2G_{111}$,
and $\FQ_{111}=G_{111}$.
\end{example}

\subsection{$\boldsymbol{\Mtrans(M,G)}$}
\label{subsec:MMG}

\begin{theorem}\label{thm:MMG}
For all $\alpha,\beta\in\Comp_n$,
$\Mtrans(M,G)_{\alpha,\beta}$ is
$(-1)^{\ell(\beta)-\ell(\alpha)} 
\prod_{j:\, \xi_{\alpha,\beta}(j) = j} (1-t^j)$
if $\beta\succeq\alpha$ and $0$ otherwise.
\end{theorem}
\begin{proof}
Fix $\alpha,\beta\in\Comp_n$.
 Since $\Mtrans(M,G)=\Mtrans(M,\FQ)\Mtrans(\FQ,G)$, 
  \begin{align*}
    \Mtrans(M,G)_{\alpha,\beta} &= \sum_{\gamma\in\Comp_n}
    \Mtrans(M,\FQ)_{\alpha,\gamma}\Mtrans(\FQ,G)_{\gamma,\beta}\\
    &= \sum_{\gamma: \beta \succeq \gamma \succeq \alpha} 
    (-1)^{\ell(\gamma)-\ell(\alpha)} t^{g(\gamma,\beta)} \\
    &= (-1)^{\ell(\beta)-\ell(\alpha)} \sum_{\gamma: 
      \beta \succeq \gamma \succeq \alpha}
    (-1)^{\ell(\gamma)-\ell(\beta)} t^{g(\gamma,\beta)}.
  \end{align*}
  Any composition $\gamma$ in the above sum is completely determined
  by specifying the values of $\xi_{\gamma,\beta}(j)$ for those $j$
  such that $\xi_{\alpha,\beta}(j)=j$. This follows since
  $\beta\succeq\gamma\succeq\alpha$ and $\xi_{\alpha,\beta}(j)=0$
  force $\xi_{\gamma,\beta}(j)=0$. On the other hand, for each $j$ with 
  $\xi_{\alpha,\beta}(j)=j$, we can always choose $\xi_{\gamma,\beta}(j)$
  to be either $0$ or $j$ when building $\gamma$. Every 
  time we choose to set $\xi_{\gamma,\beta}(j) = j$, we are increasing
  the length difference between $\beta$ and $\gamma$ by $1$.
  Additionally, we are increasing the $t$-weight by $j$.  Since all of
  these choices are independent, we can write 
  \begin{equation*}
    \sum_{\gamma: \beta \succeq \gamma \succeq \alpha} 
  (-1)^{\ell(\gamma)-\ell(\beta)} t^{g(\gamma,\beta)} = 
    \prod_{j:\, \xi_{\alpha,\beta}(j) = j} (1-t^j).
  \end{equation*}
  The theorem follows.
\end{proof}

\begin{example}
  Consider $\alpha = 22$ and $\beta = 1111$.  Then
  $\xi_{\alpha,\beta}(1) = 1$, $\xi_{\alpha,\beta}(2) = 0$ and
  $\xi_{\alpha,\beta}(3) = 3$.  So $\Mtrans(M,G)_{\alpha,\beta} =
  (-1)^2(1-t)(1-t^3)$.
\end{example}

\begin{example}
We calculate $M_3=G_3-(1-t)G_{21}-(1-t)G_{12}+(1-t)(1-t^2)G_{111}$,
 $M_{21}=G_{21}-(1-t)G_{111}$, $M_{12}=G_{12}-(1-t^2)G_{111}$, 
and $M_{111}=G_{111}$.
\end{example}

\subsection{$\boldsymbol{\Mtrans(P,G)}$}
\label{subsec:MPG}

By multiplying $\Mtrans(P,\FQ)$ and $\Mtrans(\FQ,G)$, we obtain the formula 
\begin{equation}\label{eq:P-in-G}
    \Mtrans(P,G)_{\lambda,\beta} = \sum_{\substack{S^*=(S,E)\in\syt^*(\lambda)
     \\ \Asc'(S^*)\preceq\beta}} 
    (-1)^{|E|}t^{\tstat(S^*) + g(\Asc'(S^*),\beta)}.
\end{equation} 

As described in Theorem~\ref{thm:mpg} below, the contribution of each standard
Young tableau $S$ to~\eqref{eq:P-in-G} can be expressed as a product.
One advantage of this latter form is that a frequent group of
cancellations can be identified; see Corollary~\ref{cor:cancel}.  In order
to present these results, we introduce some new notation.

For $S\in \syt(\lambda)$, define $\spec(S)$ and $\wt(c)$ as
in~\S\ref{subsec:MPFQ-skew}.  We define the following subset of
$\spec(S)$:
\begin{equation*}
  \espec(S) = \{c\in\spec(S):\, \Asc(S,\{c\}) \neq \Asc(S,\emptyset)\}.
\end{equation*}
So, $c\in\espec(S)$ if and only if $S(c)-1$ appears in the column
of $S$ just left of the column containing $S(c)$ and $S(c)-1$
appears weakly lower than $S(c)$ does.  The subset $\espec(S)$ keeps
track of which cells actually affect $\Asc'(S^*)$ for starred tableaux
with underlying tableau $S$.  As we are only concerned here with
standard tableaux $S$, we will let $c_j=c_j(S)$ denote the cell of $S$
in which $j$ appears.  Finally, note that the descent set of $S$,
$\Des(S)$, is contained in $\Asc(S^*)$ for any $S^*$ with underlying
tableau $S$.

Our intent is to derive a simplified version of~\eqref{eq:P-in-G}
in which the main sum extends over standard tableaux $S\in\syt(\lambda)$
rather than starred standard tableaux. To obtain $S^*=(S,E)$ from $S$,
we will build the subset $E$ by choosing to include or exclude
various cells $c\in\spec(S)$. The final object $S^*$ is required to
satisfy $\Asc'(S^*)\preceq \beta$, or equivalently, $\Asc(S^*)\subseteq
\sub(\beta)$. By the remark at the end of the last paragraph, this
requirement will be met only if $\Des(S)\subseteq\sub(\beta)$. So
we need only consider standard tableau $S$ satisfying this condition.

The choices of which $c = c_{j+1}(S)\in \spec(S)$ to include in $E$
can be made independently.  We consider each such $c$ according to 
whether $j\in\sub(\beta)$ and whether $c\in \espec(S)$.  We first
note that if $j\not\in\sub(\beta)$ and $c\in \espec(S)$, then
inclusion of $c$ in $E$ would cause $\Asc'(S,E)\not\preceq \beta$.
Hence, this case need not be considered further.  For each of the
remaining possibilities for $j$ and $c$, we consider the net effect on
the signed $t$-weight caused by the decision to include or exclude $c$
from $E$. For each $j\in\sub(\beta)$, let $m_j=m_j(\beta)$ be the
number of elements in $\sub(\beta)$ that are at most $j$.
\begin{enumerate}
\item \emph{$j\not\in\sub(\beta)$ and $c\not\in \espec(S)$}.  If $c$
  is included in $E$, the $t$-weight contribution will be
  $-t^{\wt(c)}$; otherwise the contribution will be $1$.\label{case:i}
\item \emph{$j\in\sub(\beta)$ and $c\in\espec(S)$.}  If $c$ is
  included in $E$, the $t$-weight contribution will be $-t^{\wt(c)}$;
  otherwise it will be $t^{m_j}$.
\item \emph{$j\in\sub(\beta)$ and $c\not\in \espec(S)$.}  Since
  $c\not\in \espec(S)$, the value of $\xi_{\Asc'(S^*),\beta}(j)$
  depends only on whether or not $j\in \Des(S)$.  Hence, there is a
  corresponding $t$-weight contribution of $t^{m_j}$ if and only if $j
  \not\in \Des(S)$.  As in Case~\ref{case:i}, there is an additional
  contribution to the $t$-weight (coming from the $\tstat$ function)
  of $-t^{\wt(c)}$ or $1$ according to whether or not $c$ is in
  $E$.\label{case:iii}
\end{enumerate}

Observing that the factor $1-t^{\wt(c)}$ occurs in both
Case~\ref{case:i} and Case~\ref{case:iii}, we reorganize the cases
slightly to obtain the following.
\begin{theorem}\label{thm:mpg}
For all $\lambda\in\Par_n$ and $\beta\in\Comp_n$,
  \begin{equation}\label{eq:pgprod}
  \Mtrans(P,G)_{\lambda,\beta} = 
  \sum_{\substack{S\in\syt(\lambda)\\ \Des(S)\subseteq \sub(\beta)}}
  \prod_{\substack{j\in\sub(\beta): \\
     c_{j+1}\in \espec(S)}} (t^{m_j}-t^{\wt(c_{j+1})})
  \prod_{\substack{j\in [n-1]:\\c_{j+1}\in \spec(S)\setminus \espec(S)}} 
     t^{m_j'}(1-t^{\wt(c_{j+1})}),
  \end{equation}
  where $m_j' = m_j$ if $j\in \sub(\beta)\setminus \Des(S)$ and $0$ otherwise.
\end{theorem}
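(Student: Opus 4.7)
The plan is to start from \eqref{eq:P-in-G} and reorganize the double sum so that the outer index is the underlying standard tableau $S\in\syt(\lambda)$ and the inner sum over $E\subseteq\spec(S)$ factors as a product of independent contributions, one per cell $c\in\spec(S)$. Once the factorization is in place, the formula of Theorem~\ref{thm:mpg} will read off directly.

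The first step is to use the constraint $\Asc'(S^*)\preceq\beta$, equivalently $\Asc(S^*)\subseteq\sub(\beta)$, to restrict the outer sum. Since $\Des(S)\subseteq\Asc(S^*)$ for every $S^*=(S,E)$, this immediately forces $\Des(S)\subseteq\sub(\beta)$, accounting for the restriction in the outer sum of the theorem. By the definition of $\espec(S)$, the extra ascents are exactly $\Asc(S^*)\setminus\Des(S)=\{j:c_{j+1}\in E\cap\espec(S)\}$, so the residual constraint is the strictly local condition ``for every $c=c_{j+1}\in\espec(S)$, one may include $c$ in $E$ only if $j\in\sub(\beta)$''. This condition can be imposed cell-by-cell and therefore does not destroy the product structure.

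The second step is to rewrite the weight $(-1)^{|E|}t^{\tstat(S^*)+g(\Asc'(S^*),\beta)}$ as a product over $c\in\spec(S)$. The factor $(-1)^{|E|}t^{\tstat(S^*)}$ is already the product $\prod_{c\in E}(-t^{\wt(c)})$. The key observation is that the $g$-exponent factors similarly: unwinding the definition one has $g(\Asc'(S^*),\beta)=\sum_{j\in\sub(\beta)\setminus\Asc(S^*)}m_j$, and for each $j\in\sub(\beta)$ whether $j\in\Asc(S^*)$ depends only on $S$ and on whether the single cell $c_{j+1}$ belongs to $E$. Pairing each relevant $j$ with the cell $c_{j+1}$ distributes the $g$-contribution over cells. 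Performing the case analysis outlined in the paragraph preceding the theorem, the sum over the two choices (include $c$ in $E$ or not) yields local factors equal to $(t^{m_j}-t^{\wt(c_{j+1})})$ when $c_{j+1}\in\espec(S)$ and $j\in\sub(\beta)$, equal to $1$ when $c_{j+1}\in\espec(S)$ and $j\notin\sub(\beta)$ (since inclusion is then forbidden), and equal to $t^{m_j'}(1-t^{\wt(c_{j+1})})$ when $c_{j+1}\in\spec(S)\setminus\espec(S)$, where $m_j'=m_j$ if $j\in\sub(\beta)\setminus\Des(S)$ and $m_j'=0$ otherwise. Multiplying the cell-by-cell factors yields the claimed formula.

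The minor technical point to verify is that indices $j$ for which $c_{j+1}$ lies in column $1$ (and hence $c_{j+1}\notin\spec(S)$) require no dedicated factor. A short shape argument shows that if $c_{j+1}$ is in column $1$, then $c_j$ must also be in column $1$ in a strictly higher row, so $j\in\Des(S)$; consequently the $g$-contribution from such a $j$ is automatically trivial, and omitting these indices from the product is harmless. The main obstacle is conceptual rather than computational: one has to recognize that the $g$-exponent, defined as a global sum indexed by breaks of $\beta$, can be distributed cell-by-cell via the bijection $j\leftrightarrow c_{j+1}$. Once this factorization is set up, the remainder is the routine bookkeeping above.
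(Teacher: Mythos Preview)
Your approach is correct and essentially identical to the paper's: both start from \eqref{eq:P-in-G}, restrict to $S$ with $\Des(S)\subseteq\sub(\beta)$, observe that the constraint and the $g$-exponent localize to the individual cells $c_{j+1}\in\spec(S)$, and then read off the per-cell factors via the same case analysis. One small slip: your claim that ``if $c_{j+1}$ is in column $1$ then $c_j$ must also be in column $1$'' is false (e.g., $S=\young(12,3)$ with $j=2$), though the conclusion you actually need, namely $j\in\Des(S)$, does hold and follows since $j$ cannot lie in any row weakly below that of $j+1$.
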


\begin{corollary}\label{cor:cancel}
  If $m_j = \wt(c_{j+1})$ for some $j\in\sub(\beta)$ with $c_{j+1}\in
  \espec(S)$, then $S$ can be omitted from the sum in~\eqref{eq:pgprod}.
\end{corollary}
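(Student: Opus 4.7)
The plan is essentially to observe that this corollary is immediate from the structure of the first product in \eqref{eq:pgprod}. Specifically, for any $S\in\syt(\lambda)$ with $\Des(S)\subseteq\sub(\beta)$, the summand attached to $S$ in \eqref{eq:pgprod} is a product that includes, for each $j\in\sub(\beta)$ with $c_{j+1}\in\espec(S)$, the factor $(t^{m_j}-t^{\wt(c_{j+1})})$.

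Thus, if the hypothesis $m_j=\wt(c_{j+1})$ holds for even a single such $j$, the corresponding factor vanishes, and so the whole product — and therefore the contribution of $S$ to the sum — is zero. Omitting $S$ from the sum then does not change the value of $M(P,G)_{\lambda,\beta}$.

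There is no real obstacle here: the statement is a direct syntactic consequence of Theorem~\ref{thm:mpg}. The only thing worth being careful about is simply to observe that the hypothesis places $j\in\sub(\beta)$ and $c_{j+1}\in\espec(S)$, which is precisely the index set over which the first product ranges, so the vanishing factor genuinely appears in the product associated with $S$. No further manipulation or combinatorial identity is needed.
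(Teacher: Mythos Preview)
Your proposal is correct and matches the paper's treatment: the corollary is stated without proof precisely because it is an immediate consequence of the factor $(t^{m_j}-t^{\wt(c_{j+1})})$ appearing in the first product of~\eqref{eq:pgprod}. Your care in noting that the hypothesis places $j$ exactly in the index set of that product is the only point worth making, and you have made it.
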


\begin{example}
  Let $\lambda = 32$ and $\beta = 1211$.  Note that $\sub(\beta) =
  \{1,3,4\}$ and so $m_1=1$, $m_3=2$, and $m_4=3$.  In
  Table~\ref{tab:ptogex} we list the five elements of $\syt(32)$
  (referred to from left to right as $S_1,\ldots,S_5$) along with
  pertinent data.  The row labeled $\prod_1$ (resp. $\prod_2$) gives
  the contributions from the first (resp. second) product
  in~\eqref{eq:pgprod}.  Since $\Des(S_2), \Des(S_3)\not\subseteq
  \sub(\beta)$, $\prod_1$ and $\prod_2$ have been left blank for these
  two tableaux.  (For reference, the corresponding products for these
  tableaux are $(t-t)\cdot t^2(1-t)(1-t)$ and
  $(t-t)(t^2-t)(t^3-t^2)\cdot 1$, respectively.)  Note that
  Corollary~\ref{cor:cancel} applies to $S_1$ with $j=m_j=1$.  So the
  only contributions are from the last two columns and we find that
  \begin{equation*} 
    \Mtrans(P,G)_{32,1211} = (t^2-t)(1-t) + (t^3-t^2)(1-t) = -t^4 + t^3 + t^2 - t.
  \end{equation*}
\end{example}

\begin{table}[h]
    \centering
    \caption{Computation of $\Mtrans(P,G)_{32,1211}$.}
  \begin{tabular}{@{}cccccc@{}} \toprule
    $S$ & $\young(123,45)$ & $\young(124,35)$ & $\young(125,34)$ 
    & $\young(134,25)$ & $\young(135,24)$\\ \midrule
    $\Des(S)$ & $\{3\}$ & $\{2,4\}$ & $\{2\}$ & $\{1,4\}$ & $\{1,3\}$\\
    $\spec(S)$ & $\{c_2,c_3,c_5\}$ & $\{c_2,c_4,c_5\}$ & 
                 $\{c_2,c_4,c_5\}$ & $\{c_3,c_4,c_5\}$ & $\{c_3,c_4,c_5\}$\\
    $\espec(S)$ & $\{c_2,c_3,c_5\}$ & $\{c_2\}$ & $\{c_2,c_4,c_5\}$ 
      & $\{c_3,c_4\}$ & $\{c_3,c_5\}$\\ 
    $\prod_1$ & $(t-t)(t^3-t)$ & & &
    $(t^2-t)$ & $(t^3-t^2)$\\
    $\prod_2$ & $1$ & & & $(1-t)$ & $(1-t)$\\\bottomrule
  \end{tabular}
  \label{tab:ptogex}
\end{table}

\subsection{$\boldsymbol{\Mtrans(\mathcal{S},G)}$}
\label{subsec:MSG}
For an $\SSCT$ $T$, define the \emph{ascent set}, denoted $\Asc(T)$, to be
the set of values $i$ for which the leftmost occurrence of $i$ appears
strictly to the right of the rightmost occurrence of $i+1$. If
$\Asc(T) = \{a_1, a_2, \ldots \}$, we let $\SumAsc(T) = a_1+a_2+
\cdots$. For example, the $\SSCT$
\begin{equation*}
  \Yvcentermath1 T=\young(22211,43,7555331,8866)
\end{equation*} 
has ascent set $\Asc(T) = \{1,3,6\}$ and $\SumAsc(T) = 1+3+6=10$.

\begin{theorem}\label{thm:MSG}
For all $\alpha, \beta \in \Comp_n$,
$\Mtrans(\mathcal{S},G)_{\alpha,\beta} 
= \sum_{T \in \SSCT(\alpha,\beta)} t^{\SumAsc(T)}$.
\end{theorem}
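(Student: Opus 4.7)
The plan is to factor $M(\mathcal{S},G) = M(\mathcal{S},\FQ)\cdot M(\FQ,G)$ and use the ingredients already recalled in the paper. By \eqref{eq:msf} we have $M(\mathcal{S},\FQ)_{\alpha,\gamma} = |\SCT(\alpha,\gamma)|$, and Theorem~\ref{thm:MFQG} gives $M(\FQ,G)_{\gamma,\beta} = t^{g(\gamma,\beta)}$ when $\beta \succeq \gamma$ and $0$ otherwise. Multiplying yields
\[
  M(\mathcal{S},G)_{\alpha,\beta}
  \;=\; \sum_{\gamma:\, \beta \succeq \gamma} |\SCT(\alpha,\gamma)|\, t^{g(\gamma,\beta)}.
\]

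Next I would collect the terms on the right using a standardization bijection $\stdz : \SSCT(\alpha,\beta) \to \bigsqcup_{\gamma:\,\beta \succeq \gamma} \SCT(\alpha,\gamma)$. Writing $M_0=0$ and $M_v = \beta_1+\cdots+\beta_v$, the map replaces the $\beta_v$ copies of each value $v$ in $T$ by the consecutive labels $M_{v-1}+1,\ldots,M_v$, assigned so that the \emph{leftmost} copy of $v$ receives the \emph{largest} label $M_v$. (The right-to-left labelling is forced by the requirement that the weakly decreasing rows of $T$ become strictly decreasing in $\stdz(T)$.) This is precisely the standardization implicit in the proof of \eqref{eq:msf} in \cite{hlmvw}, so I would cite that source for the bijectivity and for the easy check that $\gamma := \comp(\Des(\stdz(T)))$ satisfies $\beta \succeq \gamma$, because any descent of $\stdz(T)$ must occur at a block boundary $M_i$ of $\beta$. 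Under this bijection the displayed sum becomes $\sum_{T\in\SSCT(\alpha,\beta)} t^{g(\gamma,\beta)}$ with $\gamma = \comp(\Des(\stdz(T)))$.

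The substantive step, which I expect to be the main obstacle, is to verify that $g(\gamma,\beta) = \SumAsc(T)$. My plan is to unpack both sides in terms of block-boundary indices $i\in[\ell(\beta)-1]$. On one hand, directly from the definitions of $g$ and $\xi_{\gamma,\beta}$, we have $g(\gamma,\beta)=\sum_i i$ summed over those $i$ for which the $i$th partial sum $M_i$ lies in $\sub(\beta)\setminus\sub(\gamma)$. On the other hand, the standardization convention places label $M_i$ at the leftmost copy of $i$ in $T$ and label $M_i+1$ at the rightmost copy of $i+1$, so $M_i \in \Des(\stdz(T))$ iff the leftmost $i$ in $T$ lies weakly to the left of the rightmost $i+1$, i.e., iff $i\notin \Asc(T)$. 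Hence $M_i \in \sub(\beta)\setminus\sub(\gamma)$ iff $i\in\Asc(T)$, and summing these indices gives $\SumAsc(T)$.

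Once the leftmost/rightmost characterization of $M_i$'s cell under standardization is in hand, the identification with $\Asc(T)$ is direct, and the definition of $\SumAsc(T)$ is evidently tailored to produce exactly the exponent $g(\gamma,\beta)$; the combination of these matchings completes the proof.
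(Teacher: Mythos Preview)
Your proposal is correct and follows essentially the same approach as the paper: both factor $M(\mathcal{S},G)=M(\mathcal{S},\FQ)M(\FQ,G)$ and then identify the resulting sum with $\sum_T t^{\SumAsc(T)}$ via the standardization bijection between $\SSCT(\alpha,\beta)$ and $\bigsqcup_{\gamma\preceq\beta}\SCT(\alpha,\gamma)$, with the paper running the bijection in the unstandardization direction and you in the standardization direction. Your verification that $g(\gamma,\beta)=\SumAsc(T)$ via the leftmost/rightmost characterization of the cells carrying labels $M_i$ and $M_i+1$ matches the paper's check that ``ascents in $T'$ occur only inside the blocks $A_j$, and precisely at the places where the value changes from $k$ to $k+1$.''
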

\begin{proof}
  Fix $\alpha, \beta \in \Comp_n$.  We proceed to justify the
  following string of equalities:
  \begin{align*}
    \Mtrans(\mathcal{S},G)_{\alpha,\beta}
    &= \sum_{\gamma \in \Comp_n} \Mtrans(\mathcal{S},F)_{\alpha,\gamma} \Mtrans(F,G)_{\gamma,\beta} \\
    &= \sum_{\gamma: 
\gamma \preceq \beta} |\SCT(\alpha,\gamma)| t^{g(\gamma,\beta)} \\
    &= \sum_{T' \in \SSCT(\alpha,\beta)} t^{\SumAsc(T')}.
\end{align*}
The first equality follows from the fact that $\Mtrans(\mathcal{S},G) =
\Mtrans(\mathcal{S},F)\Mtrans(F,G)$.  The second follows from
Theorems~\eqref{thm:msf} and~\eqref{thm:mfg}.  To prove the third, we
construct a bijection 
\begin{equation*}
f=f_{\alpha,\beta}:\bigcup_{\gamma:\gamma\preceq\beta} \SCT(\alpha,\gamma)
 \rightarrow \SSCT(\alpha,\beta)
\end{equation*}
such that for all $T\in\SCT(\alpha,\gamma)$ with $\gamma\preceq\beta$,
$\SumAsc(f(T))=g(\gamma,\beta)$.
Fix $T$ in the domain of $f$, say $T\in\SCT(\alpha,\gamma)$
with $\gamma\preceq\beta$.
For $0\leq i\leq\ell(\gamma)$, let $d_i = \gamma_1 + \cdots + \gamma_i$, so
$\{d_0,\ldots,d_{\ell(\gamma)}\} = \Des(T) \cup \{ 0,n \}$. Then for
$1\leq i\leq\ell(\gamma)$, the entries $d_{i-1}+1, \ldots, d_i$, when
read from right to left in $T$, appear in increasing order. Let $A_i$
be the set of cells occupied by these values. Since $\beta$ is finer
than $\gamma$, there exist indices $0 = i_0 < i_1 < \cdots <
i_{\ell(\gamma)} = \ell(\beta)$ such that $\gamma_j =\beta_{i_{j-1}+1}
+ \cdots + \beta_{i_j}$ for all $j$.  Construct $T'=f(T)$ by
labeling the cells in each set $A_j$ from right to left by the sequence
\begin{equation*}
  \underbrace{(i_{j-1}+1) \cdots
  (i_{j-1}+1)}_{\beta_{i_{j-1}+1}} \underbrace{(i_{j-1}+2) \cdots
  (i_{j-1}+2)}_{\beta_{i_{j-1}+2}} \cdots \underbrace{i_j \cdots
  i_j}_{\beta_{i_j}};
\end{equation*} 
A simple check confirms that $T'$ satisfies (C1), (C2), and (C3) in
the definition of an $\SSCT$ and has content $\beta$,
so $f$ does map into the codomain $\SSCT(\alpha,\beta)$.
Furthermore, note that the ascents in
$T'$ occur only inside the blocks $A_j$.  Each such place where the
value changes from $k$ to $k+1$ corresponds to the inclusion of $k$ in
$\Asc(T')$ and hence the contribution of $k$ to
$\SumAsc(T')$. Furthermore, such a change only happens when the parts
$\beta_k$ and $\beta_{k+1}$ belong to the same part of $\gamma$.  In
this case, we have $\xi_{\gamma,\beta}(k) = k$ and hence there will
be a contribution of $k$ to $g(\gamma,\beta)$. Thus $\SumAsc(T') =
g(\gamma,\beta)$. Since this relabeling is reversible, $T$ can
be reconstructed from $T'=f(T)$, so $f$ is a bijection.  See
Example~\ref{ex:MSG} for an illustration of the bijection.
\end{proof}

\begin{example}\label{ex:MSG}
  Let $\alpha = 5274$, $\beta = 33313212$, $\gamma = 64332$, and
\begin{equation*}
  \Yvcentermath1 T=\young(65432,{\ten}9,{\sixteen}{\thirteen}{\twelve}{\eleven}871,{\eighteen}{\seventeen}{\fifteen}{\fourteen})\,.
\end{equation*}
First note that $g(\gamma,\beta) = 1+3+6=10$, since the $i$ for which
$\beta_i$ and $\beta_{i+1}$ came from the same part of $\gamma$ are
precisely $1$, $3$ and $6$. In order to construct $T'$, note that the
first part of $\gamma$ is a $6$, which is refined by the first two
parts of $\beta$ (both equal to $3$). Thus, we first need to look at the
horizontal strip of cells containing the letters $1,2,3,4,5,6$ 
and change these letters to $1,1,1,2,2,2$, respectively. The
other conversions, based on the parts of $\beta$ that fit into each
part of $\gamma$, are as follows:
\begin{align*}
7,8,9,10 		& \mapsto 3,3,3,4 \\
11,12,13 		& \mapsto 5,5,5 \\
14,15,16	 	& \mapsto 6,6,7 \\
17,18		& \mapsto 8,8.
\end{align*}
Thus, the resulting $T'$ with content $\beta$ is
\begin{equation*}
\Yvcentermath1 T'=\young(22211,43,7555331,8866)\,.
\end{equation*}
Note that $\Asc(T')=\{1,3,6\}$, so $\SumAsc(T')=1+3+6=10=g(\gamma,\beta)$.
\end{example}

\begin{example}
Using Theorem \ref{thm:MSG}, the $G$-expansion of $\mathcal{S}_{13}(t)$ is
\begin{alignat*}{15}
  & G_{13} & + & \ \ G_{22} & + & \ \ t^2 G_{211} & + & \ \ t^2 G_{121} & + & 
  \ \ t G_{112} & + & \ \ t^2 G_{112} & + & \ \ t^4 G_{1111} & + & \ \ t^5 G_{1111}. \\
  & \tableau{1\\2&2&2} & & \ \tableau{1\\2&2&1} & & \ \ 
  \tableau{1\\3&2&1} & & \ \ \tableau{1\\3&2&2} & & \ \
  \tableau{2\\3&3&1} & & \ \ \tableau{1\\3&3&2} & & \ \ 
  \tableau{2\\4&3&1} & & \ \ \tableau{1\\4&3&2} 
\end{alignat*}
\end{example}

\subsection{$\boldsymbol{\Mtrans(K,G)}$}
\label{subsec:MKG}

Consider compositions $\alpha\in\Comp'_n$ and $\beta\in\Comp_n$ such that
$\sub(\alpha) \subseteq \sub(\beta) \cup (\sub(\beta)+1)$.  Let
$\sub(\alpha) = \{a_1 < \cdots < a_m\}$ and $\sub(\beta) = \{b_1 <
\cdots < b_p\}$.  For $A \subseteq [n-1]$, let $A-1$ denote the set
obtained by subtracting one from every element in $A$ and removing $0$ if
it appears.  Define the polynomial $k(\alpha, \beta)$ as follows.  
For all $b_i\in\sub(\beta)$, let
\begin{equation}\label{eq:cases}
  k(b_i) = 
  \begin{cases}	
    1+t^i, & \textrm{if } b_i \notin \sub(\alpha) \cup (\sub(\alpha) - 1); \\
    t^{i-1}+t^i, & \textrm{if } b_i \in \sub(\alpha) 
        \textrm{ and } b_i-1 = b_{i-1};\\
    1, & \textrm{otherwise.}
  \end{cases}
\end{equation}
Finally, define $k(\alpha,\beta) = \prod_{i=1}^p k(b_i)$.

\begin{example}\label{ex:uglyexample}
If $\alpha = 4253$ and $\beta = 12213221$, 
we have $\sub(\alpha) = \{4,6,11\}$, $\sub(\alpha)-1 = \{3,5,10\}$, and
$\sub(\beta)=\{1,3,5,6,9,11,13\}$. Thus, $k(\alpha,\beta) = 
(1+t) \cdot (t^3+t^4) \cdot (1+t^5) \cdot (1+t^7)$.

Here is a graphical way of looking at this example. 
Depict a composition $\alpha = (\alpha_1, \ldots, \alpha_m)$ of $n$ 
on the interval $[0,n]$ by highlighting the points in $\sub(\alpha)$. 
As shown in Figure~\ref{fig:uglyexample}, highlight the points in $[0,n]$
between an element $a-1 \in \sub(\alpha)-1$ and 
$a \in \sub(\alpha)$. No two of the highlighted subintervals share
an endpoint, because of the assumption that $\alpha\in\Comp'_n$.
Now, the three possible cases in~\eqref{eq:cases} turn into the following:
\begin{enumerate}
\item $b_i$ does not touch any of the highlighted intervals 
(e.g., $b_1=1$, $b_5=9$, and $b_7=13$);
\item $b_{i-1}$ and $b_{i}$ are the two ends of a highlighted interval 
(e.g., $b_3=5$ and $b_4=6$); or
\item $b_i$ is an end of a highlighted interval, but $b_{i-1}$ 
is not at the other end of the highlighted interval
(e.g., $b_2=3$ and $b_6=11$).
\end{enumerate}
\end{example}

\begin{figure}[h] 
$\begin{array}{c}
\begin{picture}(100,100)(100,-100)
\put(-60,0){\makebox{$\sub(\alpha)$}}
\put(0,10){\makebox{$0$}}
\put(80,10){\makebox{$4$}}
\put(120,10){\makebox{$6$}}
\put(218,10){\makebox{$11$}}
\put(278,10){\makebox{$14$}}
\put(0,0){\makebox{$\circ$}}
\put(80,0){\makebox{$\bullet$}}
\put(120,0){\makebox{$\bullet$}}
\put(220,0){\makebox{$\bullet$}}
\put(280,0){\makebox{$\circ$}}
\put(4.5,3){\line(1,0){276.5}}

\put(-60,-30){\makebox{$\sub(\alpha)-1$}}
\put(0,-20){\makebox{$0$}}
\put(60,-20){\makebox{$3$}}
\put(100,-20){\makebox{$5$}}
\put(198,-20){\makebox{$10$}}
\put(278,-20){\makebox{$14$}}
\put(0,-30){\makebox{$\circ$}}
\put(60,-30){\makebox{$\bullet$}}
\put(100,-30){\makebox{$\bullet$}}
\put(200,-30){\makebox{$\bullet$}}
\put(280,-30){\makebox{$\circ$}}
\put(4.5,-27){\line(1,0){276.5}}

\put(0,-50){\makebox{$0$}}
\put(60,-50){\makebox{$3$}}
\put(100,-50){\makebox{$5$}}
\put(198,-50){\makebox{$10$}}
\put(278,-50){\makebox{$14$}}
\put(80,-50){\makebox{$4$}}
\put(120,-50){\makebox{$6$}}
\put(218,-50){\makebox{$11$}}
\put(0,-60){\makebox{$\circ$}}
\put(60,-60){\makebox{$\bullet$}}
\put(100,-60){\makebox{$\bullet$}}
\put(200,-60){\makebox{$\bullet$}}
\put(80,-60){\makebox{$\bullet$}}
\put(120,-60){\makebox{$\bullet$}}
\put(220,-60){\makebox{$\bullet$}}
\put(280,-60){\makebox{$\circ$}}
\put(63,-57){\linethickness{1mm}\line(1,0){20}}
\put(103,-57){\linethickness{1mm}\line(1,0){20}}
\put(203,-57){\linethickness{1mm}\line(1,0){20}}
\put(4.5,-57){\line(1,0){276.5}}

\put(-60,-90){\makebox{$\sub(\beta)$}}
\put(0,-80){\makebox{$0$}}
\put(20,-80){\makebox{$1$}}
\put(60,-80){\makebox{$3$}}
\put(100,-80){\makebox{$5$}}
\put(120,-80){\makebox{$6$}}
\put(180,-80){\makebox{$9$}}
\put(218,-80){\makebox{$11$}}
\put(258,-80){\makebox{$13$}}
\put(278,-80){\makebox{$14$}}
\put(0,-90){\makebox{$\circ$}}
\put(20,-90){\makebox{$\bullet$}}
\put(60,-90){\makebox{$\bullet$}}
\put(80,-90){\makebox{$\bullet$}} 
\put(100,-90){\makebox{$\bullet$}}
\put(120,-90){\makebox{$\bullet$}}
\put(180,-90){\makebox{$\bullet$}}
\put(200,-90){\makebox{$\bullet$}}
\put(220,-90){\makebox{$\bullet$}}
\put(260,-90){\makebox{$\bullet$}}
\put(280,-90){\makebox{$\circ$}}
\put(63,-87){\linethickness{1mm}\line(1,0){20}}
\put(103,-87){\linethickness{1mm}\line(1,0){20}}
\put(203,-87){\linethickness{1mm}\line(1,0){20}}
\put(4.5,-87){\line(1,0){276.5}}
\put(10,-100){\makebox{$1+t$}}
\put(60,-100){\makebox{$1$}}
\put(100,-100){\makebox{$t^3+t^4$}}
\put(170,-100){\makebox{$1+t^5$}}
\put(220,-100){\makebox{$1$}}
\put(250,-100){\makebox{$1+t^7$}}

\end{picture} \\
\end{array}$
\caption{For $\sub(\alpha)=\{4,6,11\}$ 
and $\sub(\beta) = \{1,3,5,6,9,11,13\}$, we have $k(\alpha,\beta) = (1+t)
\cdot (t^3+t^4) \cdot (1+t^5) \cdot (1+t^7)$.}
\label{fig:uglyexample}
\end{figure}
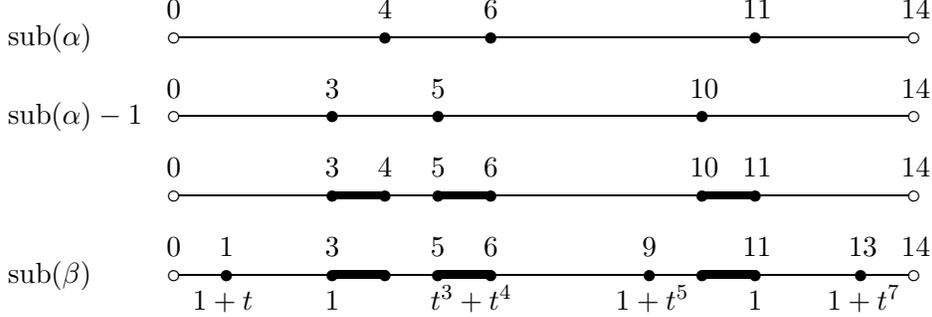

\begin{theorem}\label{thm:MKG}
For $\alpha\in\Comp'_n$ and $\beta\in\Comp_n$,
$\Mtrans(K,G)_{\alpha,\beta}$ is  
$k(\alpha,\beta)$ if $\sub(\alpha)\subseteq 
\sub(\beta) \cup (\sub(\beta)+1)$ and $0$ otherwise.
\end{theorem}
\begin{proof}
Fix $\alpha \in \Comp'_n$ and $\beta \in \Comp_n$. 
Since $\Mtrans(K,G) = \Mtrans(K,F)\Mtrans(F,G)$,
\begin{align*}
  \Mtrans(K,G)_{\alpha,\beta} &= \sum_{\gamma \in \Comp_n} \Mtrans(K,F)_{\alpha,\gamma} \Mtrans(F,G)_{\gamma,\beta} \\
  &= \sum_{\substack{\gamma:\  
\sub(\alpha) \subseteq \sub(\gamma) \bigtriangleup 
(\sub(\gamma) + 1)\\ \text{and }\beta\succeq\gamma}} t^{g(\gamma,\beta)}.
\end{align*} 
We now describe a sequence of choices that will construct
exactly the compositions $\gamma$ indexing the sum above.
By examining how each choice contributes to 
$g(\gamma,\beta)=\sum_j \xi_{\gamma,\beta}(j)$
in the power of $t$, we will obtain the product $k(\alpha,\beta)$.
For convenience of notation, let $A=\sub(\alpha)=\{a_1<\cdots<a_m\}$ and
$B=\sub(\beta)=\{b_1<\cdots<b_l\}$ as above.  Rather than
constructing $\gamma$ itself, we make choices concerning the
members of the set $C=\sub(\gamma)$. The requirements
$\sub(\alpha)\subseteq\sub(\gamma)\bigtriangleup(\sub(\gamma)+1)$
and $\beta\succeq\gamma$ translate into the requirements
$A\subseteq C\bigtriangleup (C+1)$ and $C\subseteq B$.
Since we must have $C\subseteq B$, we need only decide,
for each $b_i\in B$, whether to include or exclude $b_i$ from the set $C$.
When making these decisions, it is required that each
$a_j\in A$ belong to exactly one of the sets $C$ or $(C+1)$
being built. Equivalently, for each $a_j\in A$, exactly
one of the numbers $a_j$ or $a_j-1$ must be chosen for inclusion in $C$.  
Since $C\subseteq B$, this requirement on $A$
will only be possible if $A\subseteq B\cup (B+1)$.
Thus if $A\not\subseteq B\cup (B+1)$, we must have $\Mtrans(K,G)_{\alpha,\beta}=0$.
From now on, assume $A\subseteq B\cup (B+1)$.  
As in~\eqref{eq:cases} and Example~\ref{ex:uglyexample},
there are three cases to consider.

\begin{itemize}
\item Case~1: $b_i\not\in A\cup (A-1)$ (i.e., $b_i$ does not touch any
of the highlighted intervals in the picture). In this case,
we may freely choose to include $b_i$ as a member of $C$ or not,
which will not affect the validity of the condition 
$A\subseteq C\bigtriangleup (C+1)$. Including $b_i$ in $C=\sub(\gamma)$
will cause $\beta_i$ and $\beta_{i+1}$ to come from different parts
of $\gamma$, so that $\xi_{\gamma,\beta}(i)=0$ for this choice.
Excluding $b_i$ from $C$ will cause $\beta_i$ and $\beta_{i+1}$ to come
from the same part of $\gamma$, so that $\xi_{\gamma,\beta}(i)=i$
for this choice. The net contribution to $t^{g(\gamma,\beta)}$
based on the choice involving this $b_i$ is therefore $(1+t^i)=k(b_i)$.  

\item Case~2: $b_i\in A$ and $b_{i-1}=b_i-1$
(i.e., $b_{i-1}$ and $b_i$ are the two ends of a highlighted interval
in the picture). In this case, note that $b_i$ equals some $a_j$,
and $b_{i-1}=a_j-1$, so the requirement on $A$ forces us to
include exactly one of $b_{i-1}$ or $b_i$ in $C$.
If we decide that $b_{i-1}\not\in C$ and $b_i\in C$,
 we see (as in Case~1) that $\xi_{\gamma,\beta}(i-1)=i-1$
 and $\xi_{\gamma,\beta}(i)=0$.
If, instead, we decide that $b_{i-1}\in C$ and $b_i\not\in C$,
 we have $\xi_{\gamma,\beta}(i-1)=0$
 and $\xi_{\gamma,\beta}(i)=i$.
The net contribution to $t^{g(\gamma,\beta)}$ based on the
 choice involving both $b_{i-1}$ and $b_i$ is therefore
 $(t^{i-1}+t^i)=k(b_i)$.  

\item Case~3: The first two cases do not occur (i.e., $b_i$ is one end
of a highlighted interval in the picture, but $b_{i-1}$ is not
at the other end of this interval). Consider several subcases.
 First, suppose $b_i=a_j-1$ for some $a_j\in A$, and $a_j\not\in B$
 (i.e., $b_i$ is the left end of an interval, and the right
  end of this interval is not in $B$). The requirement on $A$
 forces us to include $b_i$ in $C$, giving $\xi_{\gamma,\beta}(i)=0$
 for this $i$ and a contribution of $k(b_i)=1$ to $t^{g(\gamma,\beta)}$.
 Second, suppose $b_i=a_j-1$ for some $a_j\in A$, and 
 $a_j=b_{i+1}$ is in $B$ as well (i.e., $b_i$ is the left end
 of an interval, and the right end $b_i+1=b_{i+1}$ is also in $B$). 
 Then we will decide the membership of $b_i$ in $C$ 
 as part of the Case~2 analysis for index $i+1$. Since the contribution
 of this $b_i$ to $t^{g(\gamma,\beta)}$ has already been accounted for
 elsewhere, we multiply by $k(b_i)=1$ in this subcase.
 Third, suppose $b_i=a_j$ for some $a_j\in A$, and
 $a_j-1$ is not in $B$ (i.e., $b_i$ is the right end of an interval
  whose left end is not in $B$). The requirement on $A$
 forces us to include $b_i$ in $C$, giving a contribution
 of $k(b_i)=1$ to $t^{g(\gamma,\beta)}$.  
\end{itemize}
 Combining all the choices using the product rule 
for weighted sets, we see that
\[ \Mtrans(K,G)_{\alpha,\beta}
=\sum_{C: A\subseteq C\bigtriangleup (C+1), C\subseteq B}
   t^{g(\comp(C),\comp(B))} = \prod_{i=1}^p k(b_i)=k(\alpha,\beta) \]
when $A\subseteq B\cup (B+1)$.  
\end{proof}

\begin{example}
\begin{equation*}
  K_{22} = G_{22} + G_{13} + (1+t^2)G_{211} + 
  (1+t^2)G_{121} + (t+t^2)G_{112} + (t+t^2)(1+t^3)G_{1111}.
\end{equation*}
\end{example}

\section{Transformed Hall-Littlewood Polynomials}
\label{sec:Qpleth}

We conclude the paper with a brief discussion of transition
matrices involving plethystically transformed Hall-Littlewood polynomials.

\subsection{Definition of $Q'_{\lambda}$}
\label{subsec:Q'-def}

For any $\lambda\in\Par$, recall that $Q_{\lambda}(x;t)$
is the Hall-Littlewood polynomial obtained from $P_{\lambda}(x;t)$
by the formula $Q_{\lambda}(x;t)=b_{\lambda}(t)P_{\lambda}(x;t)$
(see~\S\ref{subsec:MQFQ-skew}). Define the \emph{transformed
Hall-Littlewood symmetric function} by setting
\[ Q'_{\lambda}(x;t)=Q_{\lambda}\left[\frac{x}{1-t};t\right]
  =Q_{\lambda}(x_it^j:i\geq 1,j\geq 0); \]
in other words, we replace variables $x_1,x_2,\ldots$ 
in $Q_{\lambda}$ by the monomials $x_it^j$ for all $i\geq 1$ and $j\geq 0$.
We can also define $Q'_{\lambda}$ using plethystic substitution.
Recall that the power-sums $p_k$ are algebraically independent generators
of the algebra $\Sym$. Then $Q'_{\lambda}=\phi(Q_{\lambda})$, 
where $\phi$ is the unique $\field$-algebra homomorphism on $\Sym$
sending each $p_k$ to $p_k/(1-t^k)$. More information on
the transformed Hall-Littlewood polynomials can be found in
Macdonald's book~\cite{Macd} and in the 
papers~\cite{DLT-HL-SLC,garsia-orthog-milne,kirillov-HL,warnaar-HL}. 

\subsection{The Transformed Macdonald Polynomials $H_{\mu}$}
\label{subsec:Hmu}

One can also obtain $Q_{\lambda}'$ as the $q=0$ specialization of the
\emph{transformed Macdonald polynomials} $H_{\lambda}(x;q,t)$.  For a
partition $\lambda = (\lambda_1,\lambda_2,\ldots,\lambda_k)$, set
$n(\lambda) = \sum_{i=0}^k (i-1)\lambda_i$.  Haglund~\cite{hag} found
an explicit combinatorial formula for the fundamental quasisymmetric
expansion of the modified Macdonald polynomials
$\tilde{H}_{\lambda}(x;q,t)=t^{n(\lambda)}H_{\lambda}(x;q,1/t)$, which
was proved by Haglund, Haiman, and Loehr in~\cite{HHL}.  This formula
is readily adjusted to yield a formula for $H_{\lambda}$ of the form
\[ H_{\lambda}(x;q,t)=\sum_{w\in S_n}
     t^{\comaj_{\lambda}(w)}q^{\inv_{\lambda}(w)}\FQ_{\comp(\IDes(w))}, \]
where $\lambda\in\Par_n$, $\IDes(w)$ is the sum of all $i<n$ such
that $i+1$ appears to the left of $i$ in $w_1\cdots w_n$,
and $\comaj_{\lambda}$ and $\inv_{\lambda}$ are certain permutation
statistics depending on $\lambda$.

We define the statistics $\comaj_{\lambda}(w)$ and $\inv_{\lambda}(w)$ 
using the specific example $n=9$, $\lambda=(4,3,2)$, and $w=428165793$.
Fill the cells in the diagram of $\lambda$ with the letters in $w$,
working from the shortest row up to the longest row, filling each
row from left to right, as shown here:
\[ \tableau{5 & 7 & 9 & 3 \\ 8 & 1 & 6 \\ 4 & 2}. \]
This filled diagram determines a list of \emph{column words}, obtained
by reading the entries in each column from bottom to top.
For a word $v=v_1\cdots v_k$, let $\comaj(v)$ be the sum of all
$i<k$ such that $v_i<v_{i+1}$. Let $\comaj_{\lambda}(w)$ be the sum
of $\comaj(v)$ over all column words $v$ in the filled diagram for $w$.
In our example, 
\[ \comaj_{\lambda}(w)=\comaj(485)+\comaj(217)+\comaj(69)+\comaj(3)
                  =1+2+1+0=4. \]
Next, $\inv_{\lambda}(w)$ counts the \emph{inversion triples} in the
filled diagram for $w$, which are defined as follows. Consider
three cells in the diagram positioned as shown here:
\[ \tableau{ {b} \\ {a} & & & {c} }. \]
So $b$ is directly above $a$ and $c$ occurs somewhere to the right of $a$
in the same row. We also allow $a$ and $c$ to be in the top row of
the diagram, in which case we take $b=\infty$.
This triple of cells is an inversion triple iff $a<b<c$ or $b<c<a$ or $c<a<b$.
In our example, the inversion triples $(a,b,c)$ are
$(5,\infty,3)$, $(7,\infty,3)$, $(9,\infty,3)$, $(8,5,6)$, and $(4,8,2)$,  so 
$\inv_{\lambda}(w)=5$.  Since $\IDes(w)=\{1,3,5,7\}$, $w$ contributes a monomial
$t^4q^5$ to the coefficient of $\FQ_{12222}$ in $H_{432}(x;q,t)$.

\begin{theorem}[\cite{HHL}]\label{thm:MHFQ}
For all $\lambda\in\Par_n$ and $\alpha\in\Comp_n$, 
$\Mtrans(H,\FQ)_{\lambda,\alpha}$ is 
$\sum_w t^{\comaj_{\lambda}(w)}q^{\inv_{\lambda}(w)}$
summed over all $w\in S_n$ with $\IDes(w)=\sub(\alpha)$.  
\end{theorem}

\subsection{$\boldsymbol{\Mtrans(Q',\FQ)}$}
\label{subsec:MQ'FQ}

The transformed Hall-Littlewood polynomials are obtained from the
transformed Macdonald polynomials by setting $q=0$, i.e.,
$Q'_{\lambda}(x;t)=H_{\lambda}(x;0,t)$. Using Theorem~\ref{thm:MHFQ},
we therefore obtain the following combinatorial description of
the fundamental quasisymmetric expansion of $Q'$.

\begin{theorem}\label{thm:MQ'FQ1}
For all $\lambda\in\Par_n$ and $\alpha\in\Comp_n$, 
$\Mtrans(Q',\FQ)_{\lambda,\alpha}$ is $\sum_w t^{\comaj_{\lambda}(w)}$
summed over all $w\in S_n$ with $\IDes(w)=\sub(\alpha)$ 
and $\inv_{\lambda}(w)=0$.  
\end{theorem}

On the other hand, it follows from~\cite[p. 241]{Macd} that
$\Mtrans(Q',s)=M(s,P)^T$, the transpose of the $t$-Kostka matrix.  
Accordingly, $\Mtrans(Q',\FQ)=\Mtrans(s,P)^T\Mtrans(s,\FQ)$.
Carrying out the matrix multiplication and
using the interpretations of the latter matrices
from~\S\ref{subsec:MsP} and~\S\ref{subsec:MsFQ}, we obtain a second
combinatorial interpretation of $\Mtrans(Q',\FQ)$:

\begin{theorem}\label{thm:MQ'FQ2}
For all $\lambda\in\Par_n$ and $\alpha\in\Comp_n$, 
$\Mtrans(Q',\FQ)_{\lambda,\alpha}$ is $\sum_{S,T} t^{\chg(T)}$ summed 
over all pairs $(S,T)$ where $S$ is a standard tableau with
$\Des(S)=\sub(\alpha)$, $T$ is a semistandard tableau with content $\lambda$,
and $S$ and $T$ have the same shape $\mu\in\Par_n$.  
\end{theorem}

The objects appearing in this theorem may remind the reader of
the Robinson-Schensted-Knuth (RSK) algorithm. Indeed, the connection
between Theorem~\ref{thm:MQ'FQ1} and Theorem~\ref{thm:MQ'FQ2}
is explained in~\cite[\S7]{HHL} by a careful analysis involving 
the RSK algorithm.

\subsection{Open Problems}
\label{subsec:open-prob}

By matrix multiplication, we obtain a formula
for $\Mtrans(Q',G)_{\lambda,\alpha}=\sum_{\beta\in\Comp_n} 
\Mtrans(Q',\FQ)_{\lambda,\beta}\Mtrans(\FQ,G)_{\beta,\alpha}$.
We leave it as an open problem to find a more direct 
combinatorial interpretation of this transition matrix.

Another open question is to define an appropriate notion
of the ``plethystically transformed'' Hivert quasisymmetric
function $G'_{\alpha}(x;t)$, which should play a role in
$\QSym$ analogous to the role of $Q'_{\lambda}$ in $\Sym$.
It would be interesting to study the combinatorics of transition
matrices to and from such a basis.

Finally, one could develop transition matrices between Hivert's
$H$-basis (see Remark~\ref{rem:nsym}) and other known bases of
$\NSym$, for example, those constructed in \cite{bbssz, bz, gkllrt,
  hlmvw, tevlin}.

\section{Appendix: Examples of Transition Matrices}
\label{sec:app}

This appendix lists specific examples of transition matrices (old and new)
discussed in this paper. In each case, we give the relevant 
matrix for $n=4$ (many computed using SAGE~\cite{sage,sage-code}).

\[ \Mtrans(s,m)=\bordermatrix{%
& 4 & 31 & 22 & 211 & 1111 \cr
4& 1 & 1 & 1 & 1 & 1 \cr
31& 0 & 1 & 1 & 2 & 3 \cr
22& 0 & 0 & 1 & 1 & 2 \cr
211& 0 & 0 & 0 & 1 & 3 \cr
1111& 0 & 0 & 0 & 0 & 1 }. \]
\medskip
\[ \Mtrans(m,s)=\bordermatrix{
& 4 & 31 & 22 & 211 & 1111 \cr
4& 1 & -1 & 0 & 1 & -1 \cr
31& 0 &  1 &-1 &-1 & 2  \cr
22& 0 &  0 & 1 &-1 & 1 \cr
211& 0 &  0 & 0 & 1 &-3 \cr
1111& 0 &  0 & 0 & 0 & 1 }. \]
\medskip
\[ \Mtrans(s,P)=\bordermatrix{
& 4 & 31 & 22 & 211 & 1111 \cr
4 & 1 & t & t^2 & t^3 & t^6 \cr
31& 0 & 1 & t & t+t^2 & t^3+t^4+t^5 \cr
22& 0 & 0 & 1 & t & t^2+t^4 \cr
211& 0 &0 & 0& 1 & t+t^2+t^3 \cr
1111&0 &0 &0 &0 &1 }=\Mtrans(Q',s)^T. \]
\medskip
\[ \Mtrans(P,s)=\bordermatrix{
& 4 & 31 & 22 & 211 & 1111 \cr
4 & 1 & -t & 0 & t^2 & -t^3 \cr
31& 0 & 1 & -t & -t & t^2+t^3 \cr
22& 0 & 0 & 1 & -t & t^3 \cr
211& 0 &0 & 0& 1 & -t-t^2-t^3 \cr
1111&0 &0 &0 &0 &1 }=\Mtrans(s,Q')^T. \]
\medskip
\[ \Mtrans(P,m)=\bordermatrix{%
& 4 & 31 & 22 & 211 & 1111 \cr
4& 1 & 1-t & 1-t & (1-t)^2 & (1-t)^3 \cr
31& 0 & 1 & 1-t & 2(1-t) & 3-5t+t^2+t^3 \cr
22& 0 & 0 & 1 & 1-t & 2-3t+t^3 \cr
211& 0 & 0 & 0 & 1 & 3-t-t^2-t^3 \cr
1111& 0 & 0 & 0 & 0 & 1 }. \]
\medskip
\[ \Mtrans(s,\FQ)=\bordermatrix{
& 4 & 31 & 22 & 211& 13 & 121 & 112 & 1111 \cr
4&   1 & 0 & 0 & 0 & 0 & 0 & 0 & 0 \cr
31&   0 & 1 & 1 & 0 & 1 & 0 & 0 & 0 \cr
22&   0 & 0 & 1 & 0 & 0 & 1 & 0 & 0 \cr
211&   0 & 0 & 0 & 1 & 0 & 1 & 1 & 0 \cr
1111&   0 & 0 & 0 & 0 & 0 & 0 & 0 & 1 }. \]
\medskip
\[ \Mtrans(m,M)=\bordermatrix{
& 4 & 31 & 22 & 211& 13 & 121 & 112 & 1111 \cr
4&   1 & 0 & 0 & 0 & 0 & 0 & 0 & 0 \cr
31&   0 & 1 & 0 & 0 & 1 & 0 & 0 & 0 \cr
22&   0 & 0 & 1 & 0 & 0 & 0 & 0 & 0 \cr
211&   0 & 0 & 0 & 1 & 0 & 1 & 1 & 0 \cr
1111&   0 & 0 & 0 & 0 & 0 & 0 & 0 & 1 }. \]
\medskip
{\small
\[ \Mtrans(\FQ,M)=\bordermatrix{ 
& 4 & 31 & 22 & 211& 13 & 121 & 112 & 1111 \cr
4&   1 & 1 & 1 & 1 & 1 & 1 & 1 & 1 \cr
31&   0 & 1 & 0 & 1 & 0 & 1 & 0 & 1 \cr
22&   0 & 0 & 1 & 1 & 0 & 0 & 1 & 1 \cr
211&   0 & 0 & 0 & 1 & 0 & 0 & 0 & 1 \cr
13&   0 & 0 & 0 & 0 & 1 & 1 & 1 & 1 \cr 
121&   0 & 0 & 0 & 0 & 0 & 1 & 0 & 1 \cr
112&   0 & 0 & 0 & 0 & 0 & 0 & 1 & 1 \cr 
1111&   0 & 0 & 0 & 0 & 0 & 0 & 0 & 1 }. \]
}
\medskip
{\small
\[ \Mtrans(M,\FQ)=\bordermatrix{ 
& 4 & 31 & 22 & 211& 13 & 121 & 112 & 1111 \cr
4&   1 & -1 & -1 & 1 & -1 & 1 & 1 & -1 \cr
31&   0 & 1 & 0 & -1 & 0 & -1 & 0 & 1 \cr
22&   0 & 0 & 1 & -1 & 0 & 0 & -1 & 1 \cr
211&   0 & 0 & 0 & 1 & 0 & 0 & 0 & -1 \cr
13&   0 & 0 & 0 & 0 & 1 & -1 & -1 & 1 \cr 
121&   0 & 0 & 0 & 0 & 0 & 1 & 0 & -1 \cr
112&   0 & 0 & 0 & 0 & 0 & 0 & 1 & -1 \cr 
1111&   0 & 0 & 0 & 0 & 0 & 0 & 0 & 1 }. \]
}
\medskip
{\small
\[ \Mtrans(G,\FQ) = \bordermatrix{%
& 4 & 31 & 22 & 211 & 13 & 121 & 112 & 1111 \cr 
4 &    1 & -t & -t & t^2 & -t & t^2 & t^2  &-t^3 \cr
31 &   0 &  1 &  0 & -t &  0 & -t &  0 & t^2 \cr
22 &   0 &  0 &  1 & -t^2 &  0 &  0 & -t & t^3 \cr
211 &  0 &  0 &  0 &  1 &  0 &  0 &  0 & -t \cr
13 &   0 &  0 &  0 &  0 &  1 & -t^2 & -t^2 & t^4 \cr
121 &  0 &  0 &  0 &  0 &  0 &  1 &  0 & -t^2 \cr
112 &  0 &  0 &  0 &  0 &  0 &  0 &  1 & -t^3 \cr
1111 & 0 &  0 &  0 &  0 &  0 &  0 &  0 &  1 
}. \] 
}
\medskip
{\small
\[ \Mtrans(G,M) = \bordermatrix{%
& 4 & 31 & 22 & 211 & 13 & 121 & 112 & 1111 \cr 
4 &    1 & 1-t & 1-t & (1-t)^2 & 1-t & (1-t)^2 & (1-t)^2  &(1-t)^3 \cr
31 &   0 &  1 &  0 & 1-t &  0 & 1-t &  0 & (1-t)^2 \cr
22 &   0 &  0 &  1 & 1-t^2 &  0 &  0 & 1-t & (1-t)(1-t^2) \cr
211 &  0 &  0 &  0 &  1 &  0 &  0 &  0 & 1-t \cr
13 &   0 &  0 &  0 &  0 &  1 & 1-t^2 & 1-t^2 & (1-t^2)^2 \cr
121 &  0 &  0 &  0 &  0 &  0 &  1 &  0 & 1-t^2 \cr
112 &  0 &  0 &  0 &  0 &  0 &  0 &  1 & 1-t^3 \cr
1111 & 0 &  0 &  0 &  0 &  0 &  0 &  0 &  1 
}. \] 
}
\medskip
{\small 
\[ \Mtrans(\mathcal{S},M) = \bordermatrix{%
& 4 & 31 & 22 & 211 & 13 & 121 & 112 & 1111 \cr 
4 &    1 & 1 & 1 & 1 & 1 & 1 & 1 & 1 \cr
31 &   0 & 1 & 0 & 1 & 0 & 1 & 0 & 1 \cr
22 &   0 & 0 & 1 & 1 & 0 & 1 & 1 & 2 \cr
211 &  0 & 0 & 0 & 1 & 0 & 0 & 0 & 1 \cr
13 &   0 & 0 & 1 & 1 & 1 & 1 & 2 & 2 \cr
121 &  0 & 0 & 0 & 0 & 0 & 1 & 0 & 1 \cr
112 &  0 & 0 & 0 & 0 & 0 & 0 & 1 & 1 \cr
1111 & 0 & 0 & 0 & 0 & 0 & 0 & 0 & 1
}. \]
}
\medskip 
{\small
\[ \Mtrans(\mathcal{S},F) = \bordermatrix{%
& 4 & 31 & 22 & 211 & 13 & 121 & 112 & 1111 \cr 
4 &    1 & 0 & 0 & 0 & 0 & 0 & 0 & 0 \cr
31 &   0 & 1 & 0 & 0 & 0 & 0 & 0 & 0 \cr
22 &   0 & 0 & 1 & 0 & 0 & 1 & 0 & 0 \cr
211 &  0 & 0 & 0 & 1 & 0 & 0 & 0 & 0 \cr
13 &   0 & 0 & 1 & 0 & 1 & 0 & 0 & 0 \cr
121 &  0 & 0 & 0 & 0 & 0 & 1 & 0 & 0 \cr
112 &  0 & 0 & 0 & 0 & 0 & 0 & 1 & 0 \cr
1111 & 0 & 0 & 0 & 0 & 0 & 0 & 0 & 1
}. \]
}
\medskip
{\small
\[ \Mtrans(K,F) = \bordermatrix{%
& 4 & 31 & 22 & 211 & 13 & 121 & 112 & 1111 \cr 
4 &    1 & 1 & 1 & 1 & 1 & 1 & 1 & 1 \cr
31 &   0 & 1 & 1 & 0 & 0 & 1 & 1 & 0 \cr
22 &   0 & 0 & 1 & 1 & 1 & 1 & 0 & 0
}. \]
}
\medskip
{\small
\[ \Mtrans(K,M) = \bordermatrix{%
& 4 & 31 & 22 & 211 & 13 & 121 & 112 & 1111 \cr 
4 &    1 & 2 & 2 & 4 & 2 & 4 & 4 & 8 \cr
31 &   0 & 1 & 1 & 2 & 0 & 2 & 2 & 4 \cr
22 &   0 & 0 & 1 & 2 & 1 & 2 & 2 & 4
}. \]
}
\medskip
\[ 
\Mtrans(P,\FQ) = \bordermatrix{%
& 4 & 31 & 22 & 211 & 13 & 121 & 112 & 1111 \cr 
4   & 1  & -t  & -t & t^2  & -t & t^2 & t^2 & -t^3 \cr
31  & 0 & 1 & 1-t & -t & 1 & -2t  & -t  & t^3 + t^2 \cr
22  & 0 & 0 & 1  & -t & 0 & 1-t  & -t & t^3 \cr
211 & 0 & 0 & 0 & 1 & 0 & 1 & 1 & -t^3 - t^2 - t \cr
1111 & 0 & 0 & 0 & 0 & 0 & 0 & 0 & 1
}. \] 
\medskip 
$\Mtrans(Q,\FQ)=D\cdot\Mtrans(P,\FQ)$ where $D_{5\times 5}$ 
has diagonal entries
\[1-t,\quad (1-t)^2,\quad (1-t)(1-t^2),\quad (1-t)^2(1-t^2),\quad
(1-t)(1-t^2)(1-t^3)(1-t^4). \]
\medskip
{\small
\[ \Mtrans(\FQ,G) = \bordermatrix{%
& 4 & 31 & 22 & 211 & 13 & 121 & 112 & 1111 \cr 
4 &    1 & t & t & t^3 & t & t^3 & t^3 & t^6 \cr
31 &   0 & 1 & 0 & t & 0 & t & 0 & t^3 \cr
22 &   0 & 0 & 1 & t^2 & 0 & 0 & t & t^4 \cr
211 &  0 & 0 & 0 & 1 & 0 & 0 & 0 & t \cr
13 &   0 & 0 & 0 & 0 & 1 & t^2 & t^2 & t^5 \cr
121 &  0 & 0 & 0 & 0 & 0 & 1 & 0 & t^2 \cr
112 &  0 & 0 & 0 & 0 & 0 & 0 & 1 & t^3 \cr
1111 & 0 & 0 & 0 & 0 & 0 & 0 & 0 & 1
}. \]
}
\medskip
$\Mtrans(M,G)=$
{\footnotesize
\[ 
\bordermatrix{ 
& 4 & 31 & 22 & 211& 13 & 121 & 112 & 1111 \cr
4&   1 & -(1-t) & -(1-t) & (1-t)(1-t^2) & -(1-t) & (1-t)(1-t^2) 
        & (1-t)(1-t^2) & -(1-t)(1-t^2)(1-t^3) \cr
31&   0 & 1 & 0 & -(1-t) & 0 & -(1-t) & 0 & (1-t)(1-t^2) \cr
22&   0 & 0 & 1 & -(1-t^2) & 0 & 0 & -(1-t) & (1-t)(1-t^3) \cr
211&   0 & 0 & 0 & 1 & 0 & 0 & 0 & -(1-t) \cr
13&   0 & 0 & 0 & 0 & 1 & -(1-t^2) & -(1-t^2) & (1-t^2)(1-t^3) \cr 
121&   0 & 0 & 0 & 0 & 0 & 1 & 0 & -(1-t^2) \cr
112&   0 & 0 & 0 & 0 & 0 & 0 & 1 & -(1-t^3) \cr 
1111&   0 & 0 & 0 & 0 & 0 & 0 & 0 & 1 }. \]
} 
\medskip
\[ \Mtrans(P,G) = \bordermatrix{%
& 4 & 31 & 22 & 211 & 13 & 121 & 112 & 1111 \cr 
4    & 1 &  0 &  0 & 0 & 0 & 0 & 0 & 0 \cr
31   & 0 &  1 &  -t + 1 & -t^3 + t^2 & 1 & t^2 - t & 0 &  0 \cr
22   & 0 &  0 &  1 & t^2 - t & 0  & -t + 1 & 0 & 0 \cr
211  & 0 &  0 &  0 & 1 & 0 & 1 & 1 & 0 \cr
1111 & 0 &  0 &  0 & 0 & 0 & 0 & 0 & 1
}. \]
\medskip
{\small 
\[ \Mtrans(\mathcal{S},G) = \bordermatrix{%
& 4 & 31 & 22 & 211 & 13 & 121 & 112 & 1111 \cr 
4 &    1 & t & t & t^3 & t & t^3 & t^3 & t^6 \cr
31 &   0 & 1 & 0 & t & 0 & t & 0 & t^3 \cr
22 &   0 & 0 & 1 & t^2 & 0 & 1 & t & t^2+t^4 \cr
211 &  0 & 0 & 0 & 1 & 0 & 0 & 0 & t \cr
13 &   0 & 0 & 1 & t^2 & 1 & t^2 & t+t^2 & t^4+t^5 \cr
121 &  0 & 0 & 0 & 0 & 0 & 1 & 0 & t^2 \cr
112 &  0 & 0 & 0 & 0 & 0 & 0 & 1 & t^3 \cr
1111 & 0 & 0 & 0 & 0 & 0 & 0 & 0 & 1
}. \]
}
\medskip
$\Mtrans(K,G)=$
{\footnotesize 
\[ \bordermatrix{%
& 4 & 31 & 22 & 211 & 13 & 121 & 112 & 1111 \cr 
4 &    1 & 1+t & 1+t & (1+t)(1+t^2) & 1+t & (1+t)(1+t^2) & (1+t)(1+t^2) 
& (1+t)(1+t^2)(1+t^3) \cr
31 &   0 & 1 & 1 & t(1+t) & 0 & 1+t & 1+t & t^2(1+t)^2 \cr
22 &   0 & 0 & 1 & 1+t^2 & 1 & 1+t^2 & t(1+t) & t(1+t)(1+t^3)
}. \]
}
\medskip 
$\Mtrans(H,\FQ)=$
{\footnotesize
\[ \bordermatrix{%
& 4 & 31 & 22 & 211 & 13 & 121 & 112 & 1111 \cr 
4 & 1 & q+q^2+q^3 & q+2q^2+q^3+q^4 & q^3+q^4+q^5 & q+q^2+q^3
  & q^2+q^3+2q^4+q^5 & q^3+q^4+q^5 & q^6 \cr
31 & t & 1+qt+q^2t & 1+q+qt+2q^2t & q+q^2+q^3t & 1+qt+q^2t 
  & 2q+q^2+q^2t+q^3t & q+q^2+q^3t & q^3 \cr
22 & t^2 & t+qt+qt^2 & 1+t+qt+qt^2+q^2t^2 & q+qt+q^2t 
  & t+qt+qt^2 & 1+q+qt+q^2t+q^2t^2 & q+qt+q^2t & q^2 \cr
211 & t^3 & t+t^2+qt^3 & 2t+t^2+qt^2+qt^3 & 1+qt+qt^2 
   & t+t^2+qt^3 & 1+t+qt+2qt^2 & 1+qt+qt^2 & q \cr
1111 & t^6 & t^3+t^4+t^5 & t^2+t^3+2t^4+t^5 & t+t^2+t^3
  & t^3+t^4+t^5 & t+2t^2+t^3+t^4 & t+t^2+t^3 & 1
}. \]
}
\medskip
$\Mtrans(Q',\FQ)=$
{\footnotesize
\[ \bordermatrix{%
& 4 & 31 & 22 & 211 & 13 & 121 & 112 & 1111 \cr 
4 & 1 & 0 & 0 & 0 & 0 & 0 & 0 & 0 \cr 
31 & t & 1 & 1 & 0 & 1 & 0 & 0 & 0 \cr
22 & t^2 & t & 1+t & 0 & t & 1 & 0 & 0 \cr
211 & t^3 & t+t^2 & 2t+t^2 & 1 & t+t^2 & 1+t & 1 & 0 \cr
1111 & t^6 & t^3+t^4+t^5 & t^2+t^3+2t^4+t^5 & t+t^2+t^3
  & t^3+t^4+t^5 & t+2t^2+t^3+t^4 & t+t^2+t^3 & 1
}. \]
}

\section{Acknowledgments} We thank the anonymous referees for very
helpful comments.

\bibliographystyle{plain}
\bibliography{hltrans}

\end{document}